\tikzset{>=Straight Barb,
  head/.style = {fill = white, text=black},
  plaque/.style = {draw, rectangle, minimum size = 10mm}, 
  pil/.style={->,thick},
  pilpil/.style={<->,thick},
  junct/.style = {draw,circle,inner sep=0.5pt,outer sep=0pt, fill=black}
  }
\newcommand{\dynkinradius}{.15cm}
\newcommand{\dynkinstep}{.58cm}
\newcommand{\dynkinnormal}[2]{\fill (\dynkinstep*#1,\dynkinstep*#2) circle (\dynkinradius);}
\newcommand{\dynkinmin}[2]{\filldraw[fill=LightSkyBlue,draw=black] (\dynkinstep*#1,\dynkinstep*#2) circle (\dynkinradius);}
\newcommand{\dynkinline}[4]{\draw[thin] (\dynkinstep*#1,\dynkinstep*#2) -- (\dynkinstep*#3,\dynkinstep*#4);}
\newcommand{\dynkindots}[4]{\draw[dotted,very thick] (\dynkinstep*#1,\dynkinstep*#2) -- (\dynkinstep*#3,\dynkinstep*#4);}
\newcommand{\dynkindoubleline}[4]{\draw[double,double distance between line centers=0.19em,postaction={decorate}] (\dynkinstep*#1,\dynkinstep*#2) -- (\dynkinstep*#3,\dynkinstep*#4);}
\newenvironment{dynkin}{\begin{tikzpicture}[decoration={markings,mark=at position 0.7 with {\arrow{>[scale=.7]}}}]}
{\end{tikzpicture}}
\newlist{arrowlist}{itemize}{1}
\setlist[arrowlist]{label=$\Rightarrow$}
\newcommand{\x}{\ensuremath{\mathsf{x}}}
\newcommand{\y}{\ensuremath{\mathsf{y}}}
\newcommand{\z}{\ensuremath{\mathsf{z}}}
\newtheorem{theorem}{Theorem}[section]
\newtheorem{lemma}[theorem]{Lemma}
\newtheorem{proposition}[theorem]{Proposition}
\newtheorem{corollary}[theorem]{Corollary}
\newtheorem{conjecture}[theorem]{Conjecture}
\theoremstyle{definition}
\newtheorem{definition}[theorem]{Definition}
\newenvironment{example}
  {\pushQED{\qed}\examplex}
  {\popQED\endexamplex}
\theoremstyle{remark}
\newtheorem{remark}[theorem]{Remark}
\numberwithin{equation}{section}
\newcommand{\inc}{\ensuremath{\mathrm{Inc}}}
\newcommand{\incgl}{\inc_{\mathrm{gl}}}
\newcommand{\pro}{\mathfrak{pro}}
\newcommand{\rank}{\ensuremath{\mathrm{rk}}}
\newcommand{\pp}{\ensuremath{\mathsf{PP}}}
\newcommand{\deflate}{\ensuremath{\mathsf{Defl}}}
\newcommand{\inflate}{\ensuremath{\mathsf{VecInfl}}}
\newcommand{\tinflate}{\ensuremath{\mathsf{Infl}}}
\newcommand{\content}{\ensuremath{\mathsf{Con}}}
\newcommand{\compress}{\ensuremath{\mathsf{DeflCon}}}
\newcommand{\kbk}{\rho}
\newcommand{\uu}{\mathcal{I}}
\begin{document}


\title[Orbits of plane partitions]{Orbits of plane partitions of exceptional Lie type}  

\author[H. Mandel]{Holly Mandel}
\address[HM]{Department of Mathematics, University of California, Berkeley, \linebreak Berkeley, CA 94720}
\email{holly.mandel@berkeley.edu}

\author[O. Pechenik]{Oliver Pechenik}
\address[OP]{Department of Mathematics, University of Michigan, Ann Arbor, MI 48109}
\email{pechenik@umich.edu}

\subjclass[2010]{Primary 05A15; Secondary 05E18, 06A07, 17B25}

\date{\today}


\keywords{plane partition, increasing tableau, promotion, rowmotion, exceptional Lie algebra, minuscule poset, cyclic sieving phenomenon}

\begin{abstract}
For each minuscule flag variety $X$, there is a corresponding minuscule poset, describing its Schubert decomposition. We study an action on plane partitions over such posets, introduced by P.~Cameron and D.~Fon-der-Flaass (1995).
For plane partitions of height at most $2$, D.~Rush and X.~Shi (2013) proved an instance of the cyclic sieving phenomenon, completely describing the orbit structure of this action. They noted their result does not extend to greater heights in general; however, when $X$ is one of the two minuscule flag varieties of exceptional Lie type $E$, they conjectured explicit instances of cyclic sieving for all heights.

We prove their conjecture in the case that $X$ is the Cayley-Moufang plane of type $E_6$. For the other exceptional minuscule flag variety, the Freudenthal variety of type $E_7$, we establish their conjecture for heights at most $4$, but show that it fails generally. We further give a new proof of an unpublished cyclic sieving of D.~Rush and X.~Shi (2011) for plane partitions of any height in the case $X$ is an even-dimensional quadric hypersurface. 
Our argument uses ideas of K.~Dilks, O.~Pechenik, and J.~Striker (2017) to relate the action on plane partitions to combinatorics derived from $K$-theoretic Schubert calculus. 
\end{abstract}

\maketitle

%
\section{Introduction}
%
\label{sec:introduction}

The \emph{minuscule posets} are a remarkable collection of partially-ordered sets that arise naturally from the representation theory of Lie algebras or alternatively from the Schubert calculus of generalized flag varieties. We study dynamical enumerative properties of plane partitions over these posets.

A special case is the set of ordinary plane partitions that fit inside a fixed rectangular box. In this context, P.~Cameron and D.~Fon-der-Flaass \cite{Cameron.Fonderflaass} initiated the study of a combinatorially-natural operator $\Psi$. This operator is now generally known as \emph{rowmotion} and has become a subject of intense study (cf., e.g., \cite{Panyushev,Striker.Williams,Armstrong.Stump.Thomas,Rush.Shi,Einstein.Propp,Propp.Roby,Grinberg.Roby:2,Grinberg.Roby:1,DPS,Vorland, Dilks.Striker.Vorland}).
We will describe minuscule posets and the operation of rowmotion in Sections~\ref{sec:minuscule} and \ref{sec:rowmotion}, respectively.

For any poset $P$, let $\pp^k(P)$ denote the set of plane partitions of height at most $k$ over $P$, or equivalently, the set of order ideals in the product $P \times {\bf k}$ of $P$ with a chain poset of $k$ elements. Let $f_P^k$ denote the generating function that enumerates the elements of $\pp^k(P)$  by cardinality, so 
$f_P^k(q) \coloneqq \sum_{\mathcal{I} \in \pp^k(P)} q^{|\mathcal{I}|}.$ In the special case $k \leq 2$ and $P$ minuscule, D.~Rush and X.~Shi \cite{Rush.Shi} showed that $f_P^k$ also encodes the orbit structure of rowmotion via an instance of the {\bf cyclic sieving phenomenon} (introduced by V.~Reiner, D.~Stanton, and D.~White \cite{Reiner.Stanton.White}). Thus for $k \leq 2$, the number of minuscule plane partitions fixed by the $d$-fold application of rowmotion $\Psi^{\circ d}$ is the evaluation of the polynomial $f_P^k$ at $\zeta^d$, where $\zeta$ is any primitive $n$th root of unity and $n$ is the period of $\Psi$ on $\pp^k(P)$. (Since some of our other operators have superscripts in their names, we denote the $d$-fold composition of an operator $\tau$ by $\tau^{\circ d}$.)

It was noted in \cite{Rush.Shi} that this instance of cyclic sieving does not extend to the case $k\geq 3$ for general minuscule posets.
However, D.~Rush and X.~Shi conjectured the following. (The posets in question are illustrated in Figure~\ref{fig:min_poset_E}.)
\begin{conjecture}[{\cite[Conjecture~11.1]{Rush.Shi}}]\label{conj:rush.shi}
Let $P$ be one of the two minuscule posets associated to an exceptional Lie algebra of type $E$ and let $k \in \mathbb{Z}_{\geq 0}$. Then $f_P^k$ is a cyclic sieving polynomial for the action of $\Psi$ on $\pp^k(P)$.
\end{conjecture}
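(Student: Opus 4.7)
The plan is to translate Conjecture~\ref{conj:rush.shi} into a statement in $K$-theoretic Schubert calculus, following the framework of Dilks, Pechenik, and Striker \cite{DPS}. In the minuscule setting, that paper establishes an equivariant bijection between $(\pp^k(P), \Psi)$ and a set of increasing tableaux of a certain rectangular shape under $K$-promotion $\pro$. Under this bijection, the generating function $f_P^k(q)$ becomes a hook-style $q$-enumerator of tableaux, and cyclic sieving reduces to the following concrete task: for each divisor $d$ of the period of $\pro$, count the number of tableaux fixed by $\pro^{\circ d}$ and match it with the evaluation of $f_P^k$ at a primitive root of unity $\zeta^d$.

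First I would make the DPS correspondence fully explicit for each of the two exceptional minuscule posets: the $16$-element Cayley--Moufang poset of type $E_6$ and the $27$-element Freudenthal poset of type $E_7$. This produces a concrete tableau model in which to analyze orbits of $\pro$. In parallel, I would record Proctor's product form for $f_P^k(q)$, since evaluating it at roots of unity is the target of the entire calculation.

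For the type $E_6$ case I would exploit the strong symmetry of the Cayley--Moufang poset -- its self-duality, together with the order-three diagram automorphism available in $E_6$ -- to obtain a uniform proof for all $k$. The expected strategy is to identify the $\pro^{\circ d}$-fixed tableaux with objects counted by the appropriate Gaussian-binomial or hook evaluation of $f_P^k$, ideally via a deflation/compression map (in the spirit of $\compress$ and $\deflate$) that reduces to a more familiar cyclic sieving instance for $K$-promotion on rectangles.

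The main obstacle is the type $E_7$ case, where the abstract already signals that the conjecture fails in general, so the argument must bifurcate. For $k \leq 4$ I would push through the identification of $\pro^{\circ d}$-fixed tableaux by case analysis inside the $27$-element model, with the $k=4$ case being the hardest step because the tableaux are large and no triality-type symmetry is available to simplify the bookkeeping. For $k \geq 5$ the task switches to ruling out the conjecture: compute an orbit of $\Psi$ on $\pp^k(P)$ for small $k \geq 5$ (the natural first candidate being $k=5$) and exhibit a divisor $d$ of the period for which the number of $\Psi^{\circ d}$-fixed plane partitions disagrees with $f_P^k(\zeta^d)$, thereby sharply delimiting where the conjecture holds and where it fails.
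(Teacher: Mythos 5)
Your high-level skeleton matches the paper's (translate $\Psi$ on $\pp^k(P)$ to $K$-promotion on increasing tableaux via the Dilks--Pechenik--Striker/Dilks--Striker--Vorland correspondence, then bifurcate the $E_7$ case), but two of your central steps have genuine gaps. First, the correspondence is not with tableaux of ``rectangular shape'': it is with $\inc^{k+\rank(P)+1}(P)$, increasing tableaux on the minuscule poset $P$ itself, and your hope of reducing to ``a more familiar cyclic sieving instance for $K$-promotion on rectangles'' cannot work --- no such instance is available (the order of $K$-promotion on rectangles of general height is unknown, which is exactly why the paper avoids them). More importantly, for $P_{CM}$ you must handle \emph{all} $k$ simultaneously, and neither self-duality nor the order-three $E_6$ diagram automorphism gives any control over the cyclic action of $\pro^m$, whose order grows with $k$. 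The mechanism the paper actually uses, and which your proposal only gestures at, is: decompose each tableau as $\compress^m(T)=(\deflate^m(T),\content^m(T))$, prove that $\pro^m$ commutes with deflation (so the dynamics are governed by the \emph{finite} set of gapless tableaux --- $549$ for $P_{CM}$), derive an exact period formula in terms of the period of the deflated tableau and the rotation period of the content vector, count content vectors with the Reiner--Stanton--White cyclic sieving for binary words under rotation, and then match the resulting closed formula $\mathcal{R}(P,m,d)$ against root-of-unity evaluations of Proctor's Gaussian product for $f_P^k$. Without the commutation lemma and period formula, your plan has no way to convert the infinite family of statements (one for each $k$) into a finite verification.

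Second, your treatment of the $E_7$ failure is insufficient: computing one orbit structure at $k=5$ and exhibiting a mismatch only disproves the conjecture for that single $k$, whereas the claim is failure for every $k\geq 5$. The paper proves this uniformly: for all $m\geq 22$ the period of $\pro^m$ on $\inc^m(P_F)$ is $3m$ (this again requires the deflation machinery plus the computed table of the $624\,493$ gapless tableaux and their periods), no tableau is fixed by $\pro^m$ (a fixed point would force its content vector to be all $1$'s, hence the tableau would be gapless, and the table shows no gapless fixed points), yet $f^{m-17}_{P_F}(\zeta)$ cannot vanish at a primitive $(3m)$th root of unity since its numerator divides a product of factors $1-q^n$ with $n<3m$. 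Also note that a brute-force orbit computation on $\pp^5(P_F)$ without first deflating is likely computationally out of reach, so even your single-$k$ counterexample plan implicitly needs the reduction you have not supplied. Your $E_7$, $k\leq 4$ step (finite case analysis) is fine and matches the paper's numerical verification.
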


\begin{figure}[h]
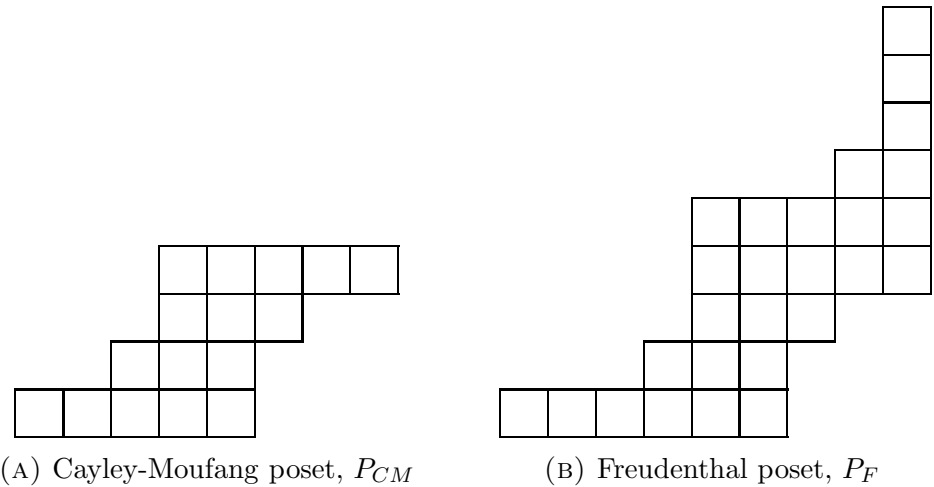

	\begin{subfigure}[b]{0.37\textwidth}
		\centering
		\ydiagram{3+5,3+3,2+3,5}
		\caption{Cayley-Moufang poset, $P_{CM}$}
	\end{subfigure} \hspace{8mm} 
	\begin{subfigure}[b]{0.37\textwidth}
		\centering
		\ydiagram{8+1,8+1,8+1,7+2,4+5,4+5,4+3,3+3,6}
		\caption{Freudenthal poset, $P_F$}
	\end{subfigure}
\caption{The two minuscule posets associated to exceptional Lie algebras of type $E$. Here, we have drawn the posets to resemble Young diagrams in Cartesian (``French'') orientation; the boxes are the elements of the poset and each box covers the box immediately below it and the box immediately to its left (if such boxes exist). Hence the minimal element of each poset is the box at the far left of the bottom row.
 The Cayley-Moufang poset $P_{CM}$ is associated to $E_6$, while the Freudenthal poset $P_F$ is associated to $E_7$. (There is no minuscule poset associated to $E_8$.)}
\label{fig:min_poset_E}
\end{figure}

Our main result is to completely resolve Conjecture~\ref{conj:rush.shi}. 
\begin{theorem}\label{thm:exceptionals}
Conjecture~\ref{conj:rush.shi} holds for the $E_6$ minuscule poset $P_{CM}$ (cf.~Figure~\ref{fig:min_poset_E}A) and all $k$, but holds for the $E_7$ minuscule poset $P_F$ (cf.~Figure~\ref{fig:min_poset_E}B) only when $k \leq 4$. 
\end{theorem}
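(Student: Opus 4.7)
The plan is to transfer rowmotion on $\pp^k(P)$ to K-theoretic promotion on increasing tableaux via the Dilks-Pechenik-Striker equivariant bijection highlighted in the abstract: order ideals in $P \times \mathbf{k}$ correspond to increasing tableaux on the rectified shape associated to $P$, and rowmotion corresponds to K-promotion. Since $f_P^k(q)$ admits a product formula (the minuscule generalization of MacMahon's box formula), verifying cyclic sieving reduces to matching, for each divisor $d$ of the period $n$, the number of increasing tableaux fixed by the $d$-fold K-promotion with the evaluation $f_P^k(\zeta^d)$ at a primitive $n$-th root of unity $\zeta$.

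For the Cayley-Moufang poset $P_{CM}$, I aim for a uniform argument in $k$ via a ``compression'' reduction. The idea is to introduce a content-deflation operation that sends an increasing tableau of arbitrary height to a bounded-size object whose K-promotion orbit controls the fixed-point status of its preimages. This would collapse the infinite family of claims (one per $k$) into a finite verification, completed by direct orbit enumeration together with root-of-unity evaluations of the product formula. I expect this step to be the main obstacle: the compression must simultaneously intertwine with K-promotion, preserve the appropriate counting data, and admit a clean description of its fibers so that the fixed-point counts on both sides of the cyclic sieving identity can be recovered from the compressed data.

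For the Freudenthal poset $P_F$ with $k \leq 4$, the state spaces $\pp^k(P_F)$ remain within reach of direct computation through the K-promotion model, so cyclic sieving can be verified by enumerating orbits and checking the evaluations of $f_{P_F}^k$ at each relevant root of unity. The negative direction, namely failure at $k = 5$, requires producing an explicit divisor $d$ of the period and a root of unity $\zeta$ for which the $\Psi^{\circ d}$-fixed point count on $\pp^5(P_F)$ does not equal $f_{P_F}^5(\zeta^d)$. This is established by exhaustive computation; the practical difficulty is managing the size of $\pp^5(P_F)$, which pushes the limits of naive enumeration and requires working in the K-promotion model and exploiting symmetries such as evacuation in order to keep the calculation feasible. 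Once any single such mismatch is exhibited, the failure of cyclic sieving for the specified polynomial is immediate.
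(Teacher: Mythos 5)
Your overall architecture for $P_{CM}$ is the same as the paper's: transfer rowmotion to $K$-promotion via the Dilks--Striker--Vorland bijection, then introduce a deflation/compression to a finite set of ``gapless'' tableaux together with a content vector, show this intertwines $K$-promotion with (promotion on the compressed tableau, cyclic rotation of the vector), and finish by enumerating the finitely many compressed tableaux and matching the resulting fixed-point counts against root-of-unity evaluations of the Gaussian product formula (using the Reiner--Stanton--White count of rotation-fixed binary vectors). So the positive half of your plan, while only sketched, is essentially the intended proof.

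The genuine gap is in the negative direction for $P_F$. The theorem asserts that the conjecture holds \emph{only} when $k \leq 4$, i.e.\ it fails for \emph{every} $k \geq 5$, whereas your plan is to exhibit a single numerical mismatch at $k=5$ by exhaustive computation on $\pp^5(P_F)$. That would (at best) settle $k=5$ and says nothing about $k=6,7,\dots$; there is no monotonicity transferring failure from one height to larger heights, so an infinite family of claims remains unproved. The paper instead gives a uniform argument valid for all $m = k+17 \geq 22$: using the compression machinery and the finite table of gapless tableaux one shows the period of $\pro^m$ on $\inc^m(P_F)$ is $3m$; any tableau fixed by $\pro^m$ must have content vector of rotation-period $1$, hence be gapless, and the table shows no gapless tableau of height $\geq 22$ is promotion-fixed; but cyclic sieving with period $3m$ would force $f_{P_F}^{m-17}(\zeta)=0$ at a primitive $3m$th root of unity, which is impossible since the numerator of the product formula only involves factors $1-q^n$ with $n<3m$. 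Note also that your proposed brute-force route is both unnecessary and likely infeasible (and it presupposes knowing the period of $\Psi$ on $\pp^5(P_F)$, which itself requires the structural analysis); the same finite gapless data you invoke for $P_{CM}$, together with the period formula, is exactly what yields the failure for all $k\geq 5$ without enumerating $\pp^k(P_F)$ at all. The verification for $P_F$ with $k\leq 4$ likewise goes through the gapless table rather than direct enumeration of the state space, though that difference is only practical.
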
  
Verification of Conjecture~\ref{conj:rush.shi} in the the cases $k \leq 4$ for $P_{CM}$ and $k \leq 3$ for $P_F$ was previously reported in \cite{Rush.Shi}. Our new results are therefore:
\begin{itemize}
\item cyclic sieving for $P_{CM}$ when $k > 4$,
\item cyclic sieving for $P_F$ when $k = 4$, and
\item failure of the conjectured cyclic sieving for $P_F$ when $k > 4$.
\end{itemize}

Our approach to proving Theorem~\ref{thm:exceptionals} is to use the ideas of K.~Dilks, O.~Pechenik, J.~Striker and C.~Vorland \cite{DPS, Dilks.Striker.Vorland} to relate the action of $\Psi$ to the action of \emph{$K$-promotion} on \emph{increasing tableaux}. $K$-promotion was first studied in \cite{Pechenik}, building on combinatorial tools for $K$-theoretic Schubert calculus due to H.~Thomas and A.~Yong \cite{Thomas.Yong:K}. We show that the action of $K$-promotion is controlled by its behavior on a finite subset of increasing tableaux. By understanding the orbit structure of this subset, we are able to determine the complete orbit structure, thereby establishing Theorem~\ref{thm:exceptionals}.

Having developed these methods, it becomes straightforward to prove the following additional result.
\begin{theorem}\label{thm:propeller}
Let $P$ be a minuscule poset associated to an even-dimensional quadric of type $D_{p+1}$ (cf.~Figure~\ref{fig:min_poset}C) and let $k \in \mathbb{Z}_{\geq 0}$. Then $f_P^k$ is a cyclic sieving polynomial for the action of $\Psi$ on $\pp^k(P)$.
\end{theorem}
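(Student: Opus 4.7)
The plan is to apply the same strategy the authors used for Theorem~\ref{thm:exceptionals}, since the propeller poset (the double-tailed diamond) is in many respects even simpler than the two exceptional posets. First, I would invoke the $K$-theoretic machinery of \cite{DPS,Dilks.Striker.Vorland} to translate the dynamical system $(\pp^k(P),\Psi)$ into an equivalent system of increasing tableaux of a fixed shape $\lambda_P^k$ equipped with $K$-promotion. For the even-dimensional quadric $D_{p+1}$ poset, the relevant shape is dictated by the two long chains meeting at a central rank-two diamond, and should be tractable to work with quite explicitly.

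Next, I would reuse the authors' observation that $K$-promotion on increasing tableaux is controlled by its restriction to a finite subset, so that it suffices to determine orbit sizes on this subset. For the propeller, the $\mathbb{Z}/2\times\mathbb{Z}/2$ symmetry of the poset (swapping the two arms, and swapping top with bottom) should cut the problem down substantially, and I would parameterize the controlling set by simple combinatorial data, such as pairs of subsets recording which entries lie on each arm and at what heights.

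With the orbit structure in hand, the final step is the cyclic sieving check: for each $d$, verify that the number of $\Psi^{\circ d}$-fixed points in $\pp^k(P)$ equals the evaluation $f_P^k(\zeta^d)$ at a primitive $n$th root of unity $\zeta$, where $n$ is the order of $\Psi$ on $\pp^k(P)$. I expect the main obstacle to be producing a sufficiently clean product or cyclotomic formula for $f_P^k(q)$ over the propeller, and then matching each such evaluation against the fixed-point counts extracted from the tableau analysis. The orbit combinatorics themselves should be routine given the structural simplicity of $P$; the key technical content of the argument will lie in this final matching step, which I anticipate reducing to standard manipulations of $q$-binomial-like factors reflecting the two-chain plus diamond decomposition of $P$.
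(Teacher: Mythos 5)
Your plan is essentially the paper's own argument: translate $\Psi$ on $\pp^k(P_p)$ into $K$-promotion on $\inc^{k+2p-1}(P_p)$ via the Dilks--Pechenik--Striker--Vorland correspondence, reduce promotion to its action on the finite controlling set of gapless tableaux via deflation/content vectors, and then match fixed-point counts against root-of-unity evaluations of $f_{P_p}^k$. The paper's execution just sharpens the two points you flag as potential work: the controlling set turns out to consist of only three gapless tableaux (one fixed point and one $2$-orbit), and the clean product formula for $f_{P_p}^k(q)$ is already available because minuscule posets are Gaussian (Proctor), so the final verification is a short $q$-binomial case check on the divisors $d$ of $m=k+2p-1$.
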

Theorem~\ref{thm:propeller} was previously announced by D.~Rush and X.~Shi \cite[Theorem~10.1]{Rush.Shi}; however, they omitted their proof \cite[\textsection 10]{Rush.Shi:report} from the published paper. We believe that our alternative proof of Theorem~\ref{thm:propeller} via $K$-theoretic combinatorics provides different insight.

\begin{remark}
Often instances of cyclic sieving can be proven using representation-theoretic techniques \cite{Reiner.Stanton.White, Rhoades:thesis}, and when cyclic sieving is established in a more direct fashion, as we do here, it may be a clue toward new underlying algebra (cf.\ \cite{Rhoades:skein}). The results in Theorems~\ref{thm:exceptionals} and \ref{thm:propeller} perhaps suggest the existence of new symmetric group module structures on the sets $\pp^k(P)$. Developing such representations would be an interesting direction for future work. In particular, we do not have a good understanding of why the posets $P_{CM}$ and $P_F$ behave so differently in Theorem~\ref{thm:exceptionals}, even though the associated algebra and geometry seems very similar; a representation-theoretic construction might shed light on this mystery. It is also possible that the difference between $P_{CM}$ and $P_F$ in Theorem~\ref{thm:exceptionals} could be explained via monodromy in real Schubert calculus by vastly extending the geometric constructions of \cite{Levinson}; from conversations with J.~Levinson and K.~Purbhoo, it seems that there are many obstacles, however, to developing the necessary geometry to give such a geometric explanation of the results here.
\end{remark}

This paper is organized as follows. In Section~\ref{sec:minuscule}, we define the minuscule posets, recalling their classification and other properties we will use. Section~\ref{sec:rowmotion} gives the precise definition of rowmotion and, following \cite{DPS,Dilks.Striker.Vorland}, notes the close relation between rowmotion and $K$-promotion. We then develop new tools for understanding the orbit structure of $K$-promotion in Sections~\ref{sec:inflation}, \ref{sec:interaction}, and \ref{sec:period}. Specifically, Section~\ref{sec:inflation} introduces the operations of \emph{inflation} and \emph{deflation}, retracting the set of increasing tableaux onto the finite subset of \emph{gapless tableaux}. 
In Section~\ref{sec:interaction}, we recall the precise definition of $K$-promotion on increasing tableaux and show how $K$-promotion is governed by its restriction to gapless tableaux. Section~\ref{sec:period} uses this information to determine the period of $K$-promotion on general increasing tableaux.
Finally, Section~\ref{sec:arithmetic} combines these ideas to prove Theorems~\ref{thm:exceptionals} and \ref{thm:propeller}.

\section{Minuscule posets}\label{sec:minuscule}

Let ${\sf G}$ be a complex connected reductive Lie group with maximal torus ${\sf T}$. Denote by $W$ the Weyl group $N_{\sf G}({\sf T})/{\sf T}$. The root system $\Phi$ of ${\sf G}$ may be partitioned $\Phi^+ \sqcup \Phi^-$ into positive and negative roots according to a choice $\Delta$ of simple roots. There is a natural poset structure on $\Phi^+$ obtained as the transitive closure of the covering relation $\alpha \lessdot \beta$ if and only if $\beta - \alpha \in \Delta$. The choice of bipartition of $\Phi$ into positive and negative roots further specifies a choice of a Borel subgroup ${\sf B}_+ \subset {\sf G}$ and an opposite Borel subgroup ${\sf B}_- \subset {\sf G}$ with ${\sf B}_+ \cap {\sf B}_- = {\sf T}$.

We say $\delta \in \Delta$ is {\bf minuscule} if for every $\alpha \in \Phi^+$, $\delta^\vee$ appears with multiplicity at most $1$ in the simple coroot expansion of $\alpha^\vee$. The classification of minuscule roots is well known and is illustrated in Figure~\ref{fig:minuscule} in terms of Dynkin diagrams.

\begin{figure}[h]
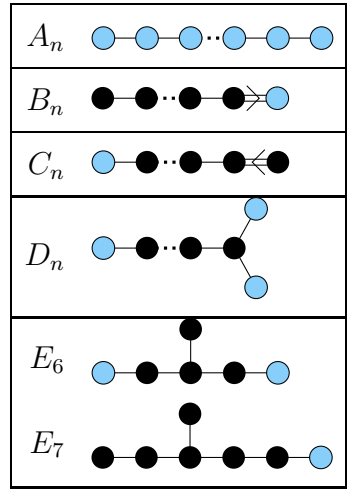

 \renewcommand*{\arraystretch}{1.6}
\begin{tabular}{|>{$}r<{$}m{3.2cm}|}
\hline
A_n &
  \begin{dynkin}
    \dynkinline{1}{0}{3}{0};
    \dynkindots{3}{0}{4}{0};
    \dynkinline{4}{0}{6}{0};
    \foreach \x in {1,...,6}
    {\dynkinmin{\x}{0}}
  \end{dynkin}
 \\  \hline B_n &
  \begin{dynkin}
    \dynkinline{1}{0}{2}{0};
    \dynkindots{2}{0}{3}{0};
    \dynkinline{3}{0}{4}{0};
    \dynkindoubleline{4}{0}{5}{0};
    \dynkinmin{5}{0};
    \foreach \x in {1,...,4}
    {
        \dynkinnormal{\x}{0}
    }
  \end{dynkin}
\\ \hline   C_n 
&
  \begin{dynkin}
    \dynkinline{1}{0}{2}{0};
    \dynkindots{2}{0}{3}{0};
    \dynkinline{3}{0}{4}{0};
    \dynkindoubleline{5}{0}{4}{0};
    \dynkinmin{1}{0};
    \foreach \x in {2,...,5}
    {
        \dynkinnormal{\x}{0}
    }
  \end{dynkin}
\\ \hline 
D_n
&
  \begin{dynkin}
    \foreach \x in {2,...,4}
    {
        \dynkinnormal{\x}{0}
    }
        \dynkinline{3}{0}{4}{0}
    \dynkinline{4}{0}{4.5}{.9}
    \dynkinline{4}{0}{4.5}{-.9}
        \dynkinline{1}{0}{2}{0}
    \dynkinmin{4.5}{.9}
    \dynkinmin{4.5}{-.9}
        \dynkinmin{1}{0}
    \dynkindots{2}{0}{3}{0}
  \end{dynkin} 
\\  \hline  E_6 
&
  \begin{dynkin}
    \foreach \x in {2,...,4}
    {
        \dynkinnormal{\x}{0}
    }
        \dynkinline{1}{0}{5}{0}
    \dynkinline{3}{0}{3}{1}
    \dynkinmin{1}{0}
    \dynkinmin{5}{0}
    \dynkinnormal{3}{1}
  \end{dynkin}
\\    E_7
&
  \begin{dynkin}
    \foreach \x in {1,...,5}
    {
        \dynkinnormal{\x}{0}
    }
        \dynkinline{1}{0}{6}{0}
    \dynkinline{3}{0}{3}{1}
    \dynkinmin{6}{0}
    \dynkinnormal{3}{1}
  \end{dynkin} \\
  \hline
\end{tabular}
 \caption{In each of the finite-type Dynkin diagrams above, each minuscule root is marked as a pale blue disk, while the non-minuscule simple roots are marked in black. In type $A_n$, every node is minuscule, while in the other types only the indicated leaves are minuscule. The remaining finite-type Dynkin diagrams are omitted because they have no minuscule nodes.}\label{fig:minuscule}
\end{figure}

For each minuscule simple root $\delta$, there is an associated {\bf minuscule poset} $P_\delta$ obtained as the subposet of $\Phi^+$ induced on those positive roots $\alpha$ where $\delta$ appears with nonzero coefficient in the simple root expansion of $\alpha$. 

Alternatively, one may obtain the minuscule posets via the geometry of certain generalized flag varieties. If ${\sf P}_\delta \supset {\sf B_+}$ denotes the maximal parabolic subgroup of ${\sf G}$ associated to the minuscule simple root $\delta$, then the space $X = {\sf G} / {\sf P}_\delta$ is called a {\bf minuscule variety}. The minuscule varieties are smooth projective varieties with many additional nice geometric properties (cf.,~e.g.,~\cite{Billey.Lakshmibai} for details). The natural action of the Borel subgroup ${\sf B}_+$ on $X$ has finitely many orbits, whose Zariski closures are the {\bf Schubert varieties}. Given two Schubert varieties in $X$, it is known that they are either disjoint or else one is a subset of the other. Indeed the poset $Y_X$ of Schubert varieties of $X$ with respect to inclusion is a distributive lattice and its corresponding poset of join irreducibles is isomorphic to the minuscule poset $P_\delta$ for the minuscule simple root $\delta$, as constructed above.

The minuscule posets are completely classified. We illustrate them here in Figures~\ref{fig:min_poset_E} and \ref{fig:min_poset}. Our focus will be on the {\bf propellers}, the {\bf Cayley-Moufang poset}, and the {\bf Freudenthal poset} as shown in Figures~\ref{fig:min_poset}C, \ref{fig:min_poset_E}A, and \ref{fig:min_poset_E}B, respectively. The propeller $P_p$ with $2p$ elements ($p \geq 3$) is associated to the minuscule node that is not adjacent to the trivalent node in the $D_{p+1}$ Dynkin diagram, the Cayley-Moufang poset $P_{CM}$ is associated to either of the two minuscule simple roots for $E_6$, and the Freudenthal poset $P_F$ is associated to the unique minuscule simple root for $E_7$ (cf.\ Figure~\ref{fig:minuscule}). The corresponding minuscule varieties are, respectively, even-dimensional quadric hypersurfaces, the octonionic projective plane (or Cayley-Moufang plane), and the Freudenthal variety. The remaining minuscule roots yield minuscule posets that are \emph{rectangles} or \emph{shifted staircases} (illustrated in Figure~\ref{fig:min_poset}A and B). These correspond respectively to type $A$ Grassmannians and to maximal orthogonal Grassmannians; we will not consider these posets further in this paper (except as convenient examples), since the orders of rowmotion and $K$-promotion are generally unknown for them.

A poset that is linearly ordered is called a {\bf chain}; we denote the chain on $k$ elements by $\mathbf{k}$. By a {\bf plane partition} of height at most $k$ over a poset $P$, we mean an order ideal of the product poset $P \times \mathbf{k}$.  Clearly, such a plane partition may be identified with a weakly order-reversing map $\pi : P \to \mathbf{k}$ (i.e., a map $\pi$ such that $\x \leq \x'$ implies $\pi(\x) \geq \pi(\x')$). 

\begin{remark}\label{rem:gaussian}
The poset $P$ is called {\bf Gaussian} if the generating function $f_P^k(q)$ may be expressed in the form
\[
f_P^k(q) = \frac{(1 - q^{h_1 + k})(1 - q^{h_2 + k})\cdots(1 - q^{h_t + k})}{(1 - q^{h_1})(1 - q^{h_2})\cdots(1 - q^{h_t})},
\]
for some nonnegative integers $t, h_1, h_2, \dots, h_t \in \mathbb{Z}_{\geq 0}$ independent of $k$.
R.~Proctor showed that every minuscule poset is Gaussian \cite{Proctor}. Indeed, it is conjectured that there are no other connected Gaussian posets. In the case of a minuscule poset $P$, one can in fact take $t = |P|$ and, for $\x \in P$, take $h_\x = \rank(\x) + 1$, where $\rank(\x)$ denotes the length of the largest chain in $P$ with maximum element $\x$. (The {\bf length} of a chain is the number of covering relations; that is, $x_0 < x_1 < \dots < x_n$ is a chain of length $n$.) Given the beautiful form of these generating functions, it is perhaps not surprising that $f_P^k$ should play a role in some instances of cyclic sieving, as in Theorems~\ref{thm:exceptionals} and \ref{thm:propeller}. 
\end{remark}

\begin{figure}[h]
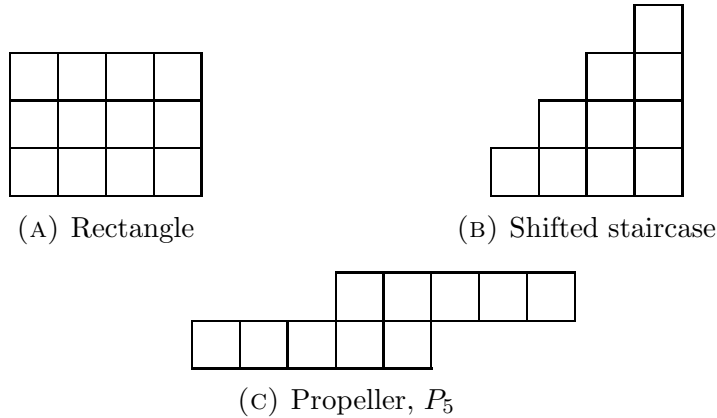

	\begin{subfigure}[b]{0.27\textwidth}
		\centering
		\ydiagram{4,4,4}
		\caption{Rectangle}
	\end{subfigure}
	\hspace{2cm}
	\begin{subfigure}[b]{0.27\textwidth}
		\centering
		\ydiagram{3+1,2+2,1+3,4}
		\caption{Shifted staircase}
	\end{subfigure} \\
	\vspace{3mm}
	\begin{subfigure}[b]{0.27\textwidth}
		\centering
		\ydiagram{3+5,5}
		\caption{Propeller, $P_5$}
	\end{subfigure}
\caption{Together with the two posets shown in Figure~\ref{fig:min_poset_E}, these are exemplars of all five families of minuscule posets, shown in Cartesian orientation. The elements of each poset are the boxes, and each box is covered by any box immediately above it or immediately to its right. Rectangles may have arbitrary height and width. Shifted staircases have arbitrary width, and height equal to their width. The propeller $P_p$ consists of two rows of length $p$, overlapping by two boxes in the center. The Cayley-Moufang and Freudenthal posets of Figure~\ref{fig:min_poset_E} are exceptional, forming singleton families.}\label{fig:min_poset}
\end{figure}

\section{The rowmotion operator $\Psi$}\label{sec:rowmotion}

If $P$ is any finite poset, let $J(P)$ denote the set of its order ideals. For $\uu \in J(P)$, we define $\Psi(\uu) \in J(P)$ to be the order ideal generated by the minimal elements of the complement $P - \uu$. This operator $\Psi$ is closely related to those described in \cite{Brouwer.Schrijver,Duchet,Cameron.Fonderflaass} in different contexts. It was first considered explicitly as an action on order ideals by J.~Striker and N.~Williams \cite{Striker.Williams}. We follow them in referring to $\Psi$ as {\bf rowmotion}.

We now observe that results of \cite{DPS} and \cite{Dilks.Striker.Vorland} enable us to study the rowmotion action $\Psi$ via the action of $K$-promotion on increasing tableaux.

\begin{definition}
Let $\lambda$ be an order ideal of a minuscule poset, considered as an generalized Young diagram in Cartesian orientation as in Figures~\ref{fig:min_poset_E} and \ref{fig:min_poset}. An {\bf increasing tableau} of shape $\lambda$ is an assignment of a positive integer to each box of $\lambda$, such that entries strictly increase from left to right along rows and strictly increase from bottom to top going up columns. Let $\inc^m(\lambda)$ denote the set of increasing tableaux of shape $\lambda$ with all entries at most $m$. We identify $\inc^m(\lambda)$ with the set of strictly order-preserving maps from the poset $\lambda$ to the chain poset $\mathbf{m} = \{1, 2, \dots, m\}$ on $m$ elements. For an example of an increasing tableau, see Figure~\ref{fig:inctab}.
\end{definition}

\begin{figure}[h]
\ytableaushort{\none \none \none \none \none \none \none \none {23}, \none \none \none \none \none \none \none \none {22}, \none \none \none \none \none \none \none \none {21}, \none \none \none \none \none \none \none {19} {20}, \none  \none \none \none {11} {12}{15}{18}{19}, \none \none \none \none 9 {11}{13}{16}{17}, \none \none \none \none 8{10}{11}, \none \none \none 679,123456}
\caption{A representative increasing tableau $T \in \inc^{23}(P_F)$.}\label{fig:inctab}
\end{figure}

From the combinatorics of $K$-theoretic Schubert calculus one obtains a $K$-promotion operator $\pro^m$ on $\inc^m(\lambda)$ that directly extends M.-P.~Sch\"utzenberger's classical definition of promotion \cite{Schutzenberger:promotion}. We will define $\pro^m$ in Section~\ref{sec:interaction}. For a finite poset $P$, let $\rank(P)$ denote the length of the longest chain in $P$. (This is one less than the definition of $\rank$ in \cite{Dilks.Striker.Vorland}.) 

\begin{proposition}
Let $P \in \{ P_{CM}, P_F, P_p \}$ and $k \in \mathbb{Z}_{\geq 0}$.
There is an equivariant bijection between $\inc^{k+\rank(P)+1}(P)$ under $K$-promotion and $J(P \times {\bf k})$ under $\Psi$.
\end{proposition}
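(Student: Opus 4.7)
The plan is to construct an explicit bijection $\phi : J(P\times \mathbf{k}) \to \inc^{k+\rank(P)+1}(P)$ and then verify equivariance by transporting the toggle-group decomposition of $\Psi$ to the $K$-Bender--Knuth decomposition of $K$-promotion, following the framework of \cite{DPS,Dilks.Striker.Vorland}. First I would identify an order ideal $\uu\in J(P\times\mathbf{k})$ with the unique weakly order-reversing map $\pi_\uu:P\to\{0,1,\ldots,k\}$ that sends $\x$ to the largest $j$ with $(\x,j)\in\uu$ (and to $0$ if no such $j$ exists). Defining
\[
\phi(\uu)(\x) \;=\; \rank(\x) + 1 + \bigl(k - \pi_\uu(\x)\bigr),
\]
the bound $0\leq \pi_\uu(\x)\leq k$ forces $\phi(\uu)$ to take values in $\{1,\ldots,k+\rank(P)+1\}$, and strict increase along every cover in $P$ follows because $\rank$ jumps by exactly $1$ on a cover while $\pi_\uu$ weakly decreases. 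The inverse map $T\mapsto \pi$ given by $\pi(\x)=k+\rank(\x)+1-T(\x)$ is immediate, so $\phi$ is a bijection.

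For equivariance, the plan is to factor both operators as products of involutions and to show that $\phi$ intertwines these factorizations. By Cameron--Fon-der-Flaass and Striker--Williams, rowmotion $\Psi$ on $J(P\times\mathbf{k})$ equals the ordered product of order-ideal toggles $t_{(\x,j)}$ taken along any linear extension of $P\times\mathbf{k}$; in particular, we may process the rank levels $R_0,R_1,\ldots$ of $P\times\mathbf{k}$ in turn, since elements at a common rank are pairwise incomparable and their toggles commute. On the tableau side, $K$-promotion $\pro^{k+\rank(P)+1}$ (to be defined in Section~\ref{sec:interaction}) decomposes as a composition of $K$-Bender--Knuth involutions $\rho_1,\ldots,\rho_{k+\rank(P)}$. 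The central technical input of \cite{DPS,Dilks.Striker.Vorland} is a dictionary showing that under $\phi$, the composite toggle over each rank level of $P\times\mathbf{k}$ translates into a single $K$-Bender--Knuth involution on $\inc^{k+\rank(P)+1}(P)$, thereby aligning the two ordered decompositions and identifying $\Psi$ with $\pro^{k+\rank(P)+1}$ after transport.

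The main obstacle is to verify that this dictionary applies uniformly to the three non-rectangular posets $P_{CM}$, $P_F$, and $P_p$. The argument of \cite{DPS} was originally carried out for rectangles, and \cite{Dilks.Striker.Vorland} extended it to a broader class that should already include the minuscule posets at hand, but one must check that the precise conventions about cover relations, rank, and the local action of $K$-jeu de taquin line up with those used here. Since minuscule posets admit a well-behaved $K$-theoretic jeu-de-taquin theory and a strictly graded cover structure, this compatibility is expected to be routine; once it is recorded, the proposition follows directly from the cited results.
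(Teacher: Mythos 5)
Your explicit map $\phi$ is essentially the standard coordinatewise identification underlying \cite{Dilks.Striker.Vorland}, but note that even its bijectivity is not free: for the inverse $\pi(\x)=k+\rank(\x)+1-T(\x)$ to take values in $\{0,\dots,k\}$ you need $T(\x)\le k+\rank(\x)+1$ for \emph{every} $T\in\inc^{k+\rank(P)+1}(P)$, and this holds exactly because every maximal chain through any $\x\in P$ has the same length $\rank(P)$ when $P\in\{P_{CM},P_F,P_p\}$ (it fails for general posets, where $\inc^{k+\rank(P)+1}(P)$ is strictly bigger than $J(P\times\mathbf{k})$). That constant-chain-length property is the one substantive hypothesis in the whole proposition---it is precisely what the paper verifies in order to apply \cite[Corollary~3.28]{Dilks.Striker.Vorland} and identify the auxiliary poset $\Gamma_1(P,k+\rank(P)+1)$ with $P\times\mathbf{k}$---yet in your write-up it is exactly the point deferred as ``expected to be routine.''

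The more serious gap is your equivariance mechanism. Under $\phi$ (or any such coordinatewise identification), the operator $\kbk_i$ toggles the slice of $P\times\mathbf{k}$ on which $j-\rank(\x)$ is constant, \emph{not} a rank level of $P\times\mathbf{k}$, on which $j+\rank(\x)$ is constant; these are genuinely different families of antichains once $k\ge 2$ and $P$ has a cover relation. Consequently $\phi$ does not conjugate $\Psi$ into $\pro^{k+\rank(P)+1}$: already for $P=\mathbf{2}$ and $k=1$ your $\phi$ transports $\Psi$ to the \emph{inverse} of $K$-promotion, and in general the two maps are products of toggles over different hyperplane slicings. What \cite{DPS,Dilks.Striker.Vorland} actually prove, following the toggle-group technique of \cite{Striker.Williams}, is that these two slice-by-slice products are conjugate in the toggle group of the larger poset, so the equivariant bijection of \cite[Corollary~5.2]{Dilks.Striker.Vorland} is not $\phi$ itself but $\phi$ composed with a conjugating toggle-group element. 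Your claimed dictionary (``the composite toggle over each rank level translates into a single $K$-Bender--Knuth involution, identifying $\Psi$ with $\pro$ after transport'') is therefore false as stated, and since the rest of the argument rests on it, the proof does not go through as written; it is repaired by replacing that sentence with the conjugacy statement and making the chain-length verification explicit, which is exactly the two-step citation the paper uses.
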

\begin{proof}
By \cite[Corollary~5.2]{Dilks.Striker.Vorland}, there is an equivariant bijection between the sets $\inc^{k+\rank(P)+1}(P)$ under $K$-promotion and $J(\Gamma_1(P,k+\rank(P)+1))$  under $\Psi$, where $\Gamma_1(P,k+\rank(P)+1)$ is an auxilliary poset constructed from $P$ in \cite[\textsection 3.1]{Dilks.Striker.Vorland}.
For any $\x \in P$, note that the maximal length of a chain through $\x$ is independent of $\x$. (Specifically, this maximal length is $\rank(P)$; we have $\rank(P_{CM}) = 10$, $\rank(P_F)= 16$, and $\rank(P_p) = 2p-2$.)
Hence \cite[Corollary~3.28]{Dilks.Striker.Vorland} applies, and in these cases we have the isomorphism of posets $\Gamma_1(P,k+\rank(P)+1) \cong P \times {\bf k}$.
\end{proof}

\begin{corollary}\label{cor:multisets}
Let $P \in \{ P_{CM}, P_F, P_p \}$ and $k \in \mathbb{Z}_{\geq 0}$.
Then the multiset of cardinalities of $\Psi$-orbits on $J(P \times \mathbf{k})$ and the multiset of cardinalities of $\pro^{\rank(P)+ k+1}$-orbits on $\inc^{\rank(P)+ k+1}(\lambda)$ are equal. \qed
\end{corollary}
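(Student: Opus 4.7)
The plan is to deduce this directly from the preceding proposition, since the equality of multisets of orbit cardinalities is a standard formal consequence of an equivariant bijection between two sets acted on by invertible (or, more weakly, well-defined) operators. Concretely, I would invoke the proposition to obtain a bijection $\varphi : \inc^{k+\rank(P)+1}(P) \to J(P \times {\bf k})$ satisfying $\varphi \circ \pro^{k+\rank(P)+1} = \Psi \circ \varphi$, and then observe that such a $\varphi$ carries each $\pro^{k+\rank(P)+1}$-orbit bijectively onto a $\Psi$-orbit of the same cardinality.

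More precisely, for any $T \in \inc^{k+\rank(P)+1}(P)$, iterating the intertwining identity gives $\varphi\bigl((\pro^{k+\rank(P)+1})^{\circ d}(T)\bigr) = \Psi^{\circ d}(\varphi(T))$ for every $d \geq 0$, so the image under $\varphi$ of the orbit of $T$ is exactly the orbit of $\varphi(T)$. Because $\varphi$ is a bijection between finite sets, distinct orbits on the left map to distinct orbits on the right and the restriction of $\varphi$ to each orbit is itself a bijection onto the image orbit, hence preserves cardinality. Summing over all orbits produces the claimed equality of multisets. The only mild point to check is that rowmotion $\Psi$ and $K$-promotion $\pro^{k+\rank(P)+1}$ are genuine bijections on their respective finite sets, so that orbits are unambiguously defined partitions of those sets; $\Psi$ is a bijection on $J(P \times {\bf k})$ by construction (it has an explicit inverse on any finite poset), and $\pro^m$ is a bijection on $\inc^m(\lambda)$ by the general theory recalled in Section~\ref{sec:interaction}. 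There is no real obstacle here: the proposition does all the work, and the corollary is a routine transport-of-structure argument that would be recorded as a one-line deduction ending with $\qed$.
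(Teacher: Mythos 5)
Your argument is exactly the one the paper intends: the corollary is stated with an immediate \qed because it follows formally from the equivariant bijection of the preceding proposition, and your transport-of-structure reasoning (iterating the intertwining identity, noting both operators are bijections so orbits partition the finite sets) is the standard deduction being left implicit. Correct, and essentially the same approach as the paper.
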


\section{Inflation and deflation}\label{sec:inflation}
In light of Corollary~\ref{cor:multisets}, we now turn to a more thorough study of increasing tableaux. Let $\lambda$ be an order ideal in a minuscule poset.
Suppose $T \in \inc^m(\lambda)$ and consider $T$ as a strictly order-preserving map from $\lambda$ to the chain poset ${\bf m}$. We say that $T$ is {\bf gapless} if this map is surjective and {\bf gappy} otherwise. We write $\incgl^m(\lambda)$ for the subset of all gapless tableaux in $\inc^m(\lambda)$. Notice that the set 
\[
\incgl(\lambda) \coloneqq \coprod_{m} \incgl^m(\lambda)
\]
is finite. This fact will be critical to our proofs of Theorems~\ref{thm:exceptionals} and~\ref{thm:propeller}.

For $T \in \inc^m(\lambda)$, let $m_T$ be the number of distinct labels in $T$. For each $m$, we define the {\bf deflation} map \[\deflate^m : \inc^m(\lambda) \to \coprod_{0 \leq n \leq m} \incgl^n(\lambda)\] by
\[
[\deflate^m(T)](\x) =
\# \{ h \in \mathrm{range}(T): h \leq T(\x) \} ,
\]
for $T \in \inc^m(\lambda)$ and $\x \in \lambda$. Note that $\deflate^m(T) \in \incgl^{m_T}(\lambda)$ and that $m_T \leq m$.

For nonnegative integers $j$ and $k$, let $\binom{[j]}{k}$ denote the collection of binary vectors of length $j$ with $k$ $1$'s. We now define the {\bf content vector} function 
\[
 \content^m : \inc^m(\lambda) \to \{ 0, 1\}^m = \coprod_{0 \leq  n \leq m} \binom{[m]}{n}
 \] 
 by 
\[
\content^m(T) = (c_1, \dots, c_m),
\] 
where $c_i = 1$ if $i \in \mathrm{range}(T)$ and $c_i = 0$ if $i \notin \mathrm{range}(T)$. (Note that this definition of content differs from the usual notion of content for semistandard tableaux in that here we do not care about the multiplicity of a label, but only its presence or absence.)

\begin{example}\label{ex:deflate}
If $T = \ytableaushort{456,125} \in \inc^7(2 \times 3)$, then the deflation of $T$ is \[\deflate^7(T) = \ytableaushort{345,124} \in \incgl^5(2 \times 3).\] The content vector of $T$ is $\content^7(T) = (1,1,0,1,1,1,0) \in \binom{[7]}{5} \subset \{0,1\}^7$.
\end{example}
Now if $\deflate^m(T) \in \incgl^n(\lambda)$, then $\content^m(T) \in \binom{[m]}{n}$. We denote by $\compress^m$ the product map
\[
\compress^m \coloneqq (\deflate^m,\content^m).
\] 
\begin{proposition}\label{prop:compressbijective} The map 
\[
\compress^m : \inc^m(\lambda) \to \coprod_{0 \leq n \leq m} \left( \incgl^n(\lambda) \times \binom{[m]}{n} \right)
\]
 is bijective.
\end{proposition}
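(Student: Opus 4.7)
The plan is to construct an explicit two-sided inverse to $\compress^m$, which one might call a \emph{reinflation} map. Given a pair $(S, c)$ with $S \in \incgl^n(\lambda)$ and $c = (c_1, \dots, c_m) \in \binom{[m]}{n}$, let $i_1 < i_2 < \cdots < i_n$ be the positions in $[m]$ at which $c$ equals $1$, and define $\reinflate^m(S, c) \in \inc^m(\lambda)$ by
\[
[\reinflate^m(S,c)](\x) \;=\; i_{S(\x)}
\]
for all $\x \in \lambda$. Since $S$ is strictly order-preserving into ${\bf n}$ and the map $j \mapsto i_j$ is strictly increasing into $[m]$, the composition is strictly order-preserving into $[m]$, so $\reinflate^m(S,c)$ is indeed a valid element of $\inc^m(\lambda)$.

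Next I would check $\compress^m \circ \reinflate^m = \id$. Fix $(S,c)$ with $S \in \incgl^n(\lambda)$, and let $T = \reinflate^m(S,c)$. Because $S$ is gapless, its range is all of $\{1, \dots, n\}$, so the range of $T$ is exactly $\{i_1, \dots, i_n\}$; hence $\content^m(T) = c$. Moreover for each $\x$, the number of elements of $\mathrm{range}(T) = \{i_1, \dots, i_n\}$ that are $\leq T(\x) = i_{S(\x)}$ is precisely $S(\x)$, so $\deflate^m(T) = S$.

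Then I would verify $\reinflate^m \circ \compress^m = \id$. Fix $T \in \inc^m(\lambda)$, and let $i_1 < \cdots < i_{m_T}$ enumerate $\mathrm{range}(T)$. Then $\content^m(T) \in \binom{[m]}{m_T}$ has its $1$'s at exactly the positions $i_1, \ldots, i_{m_T}$, so reinflation uses precisely this list. By definition of $\deflate^m$, the value $[\deflate^m(T)](\x)$ equals the rank of $T(\x)$ in the list $i_1 < \cdots < i_{m_T}$, i.e., the unique $j$ with $i_j = T(\x)$. Therefore $[\reinflate^m(\deflate^m(T), \content^m(T))](\x) = i_j = T(\x)$, as required.

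There is no substantial obstacle here; the argument is essentially a matter of bookkeeping. The only mild care needed is to observe that \emph{gaplessness} of the deflated tableau is exactly what makes the content vector record enough information to recover $T$: the $1$'s of $\content^m(T)$ correspond bijectively to the labels $1, \dots, m_T$ of $\deflate^m(T)$, and the reinflation map exploits this correspondence. Thus $\compress^m$ and $\reinflate^m$ are mutually inverse, proving the proposition.
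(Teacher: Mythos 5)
Your proof is correct and follows essentially the same route as the paper: your $\reinflate^m$ is exactly the paper's tableau inflation $\tinflate^m(S,v) = \inflate^m_v \circ S$, since $\inflate^m_v(j)$ is precisely the position $i_j$ of the $j$th $1$ in $v$, and you verify the same two composite identities. Your explicit check that the reinflated filling is strictly increasing, and your remark on the role of gaplessness, are welcome additions but do not change the argument.
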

\begin{proof}
A two-sided inverse for $\compress^m$ is given as follows. For any positive integer $j$, let $[j]$ denote the set $\{1, 2, \dots, j\}$.
For a binary vector $v \in \{0,1\}^m$, let $N_v$ be the number of $1$'s in $v$, and define a {\bf vector inflation} map \[\inflate^m_v : [N_v] \to [m]\] by
\[ \inflate^m_v(k) = \min \bigg\lbrace n \in [m]:   \sum_{\ell = 1}^n v\langle \ell \rangle = k \bigg\rbrace.\] (We use angled brackets $``\langle \phantom{\ell} \rangle"$ throughout the paper to denote vector components.) An integer $j \in [m]$ is in the range of $\inflate^m_v$ if and only if $v\langle j \rangle = 1$. Therefore
\begin{align*}
[\inflate^m_{\content^m(T)} \circ \deflate^m(T)](\x) &= \min \bigg\lbrace n \in [m]:    \# \{ h \in \mathrm{range}(T): h \leq n \} = [\deflate^m(T)](\x) \bigg\rbrace \\ &= T(\x). 
\end{align*} Now define the {\bf tableau inflation} map 
\[
\tinflate^m : \coprod_{0 \leq n \leq m} \left( \incgl^n(\lambda) \times \binom{[m]}{n} \right) \to \inc^m(\lambda)
\] 
by 
\[
\tinflate^m(S,v) = \inflate^m_v \circ S.
\]
Say $(S,v) \in \incgl^n(\lambda) \times \binom{[m]}{n}$. Since $S$ is surjective onto $[n]$ and $\inflate^m_v$ maps $[n]$ onto the indices of nonzero components in $v$, $\content^m (\tinflate^m(S,v)) = v$.  Also,
\begin{align*}
 [\deflate^m(\inflate^m_v  \circ S)](\x) &= \# \{ h \in \mathrm{range}(\inflate^m_v \circ S): h \leq \inflate^m_v \circ S(\x) \} \\  
 &= \# \{ h: v\langle h\rangle \neq 0 \text{ and } \sum_{\ell = 1}^h v\langle\ell\rangle \leq S(\x)  \} \\
 &= S(\x).
\end{align*}
Therefore, $\tinflate^m$ is a two-sided inverse for $\compress^m$.
\end{proof} 
\begin{example}\label{ex:reinflate}
Let $v = (1,1,0,1,1,1,0) \in \{0,1\}^7$. Then, $N_v = 5$ and the map $\inflate^7_v : [5] \to [7]$ is given by 
\begin{align*}
\inflate^7_v(1) &= 1, \\
\inflate^7_v(2) &= 2, \\
\inflate_v^7(3) &= 4, \\
\inflate_v^7(4) &= 5, \\
\inflate_v^7(5) &= 6. 
\end{align*}
Now, for $S = \ytableaushort{345,124}$, we have $\tinflate^7(S,v) = \inflate^7_v \circ S = \ytableaushort{456,125}$. Note that this process has recovered the tableau $T$ of Example~\ref{ex:deflate} from $S=\deflate^7(T)$ and $v=\content^7(T)$.
\end{example}

\section{Interaction between $K$-promotion and deflation}\label{sec:interaction}
A classical approach to the cohomological Schubert calculus of type $A$ Grassmannians is to use the jeu de taquin on standard Young tableaux introduced by M.-P.~Sch\"utzenberger \cite{Schutzenberger:jdt}. (See \cite{Fulton, Manivel} for modern expositions of the classical jeu de taquin theory.) A theme of modern Schubert calculus has been extending such theories to richer generalized cohomologies and in particular into the $K$-theory ring of algebraic vector bundles. For a partial survey of recent work related to $K$-theoretic Schubert calculus and the associated combinatorics, see \cite{Pechenik.Yong:genomic}.

The first $K$-theoretic Littlewood-Richardson rule was discovered by A.~Buch \cite{Buch}; this rule, however, was not based on jeu de taquin. H.~Thomas and A.~Yong \cite{Thomas.Yong:K} later found a different Littlewood-Richardson rule for the same structure coefficients by directly extending M.-P.~Sch\"utzenberger's jeu de taquin. This latter rule was conjectured in \cite{Thomas.Yong:K} to extend to the $K$-theoretic Schubert structure coefficients of all minuscule varieties, as was proven by a combination of \cite{Buch.Ravikumar,Clifford.Thomas.Yong,Buch.Samuel}. In this Thomas-Yong theory, the role of standard Young tableaux is filled by increasing tableaux.

The $K$-jeu de taquin of H.~Thomas and A.~Yong \cite{Thomas.Yong:K} gives rise to a $K$-promotion operator $\pro^m$ on $\inc^m(\lambda)$ that directly extends M.-P.~Sch\"utzenberger's classical definition of promotion \cite{Schutzenberger:promotion}. $K$-promotion was first studied in \cite{Pechenik}, and has been further investigated in \cite{BPS, Pressey.Stokke.Visentin, Rhoades:skein, DPS, Pechenik:frames,Vorland}. 

In this section, we determine how $\pro^m$ interacts with $\compress^m$. Our results will show that $\pro^m$ is controlled by its action on gapless increasing tableaux. The power of this observation is that there are only finitely many gapless increasing tableaux of any fixed shape $\lambda$. Thus a finite amount of data governs the action of $\pro^m$ on $\inc^m(\lambda)$ for all $m$.

First, we define $\pro^m$, setting up notation that we will need. Instead of using the original definition of \cite{Pechenik}, based on $K$-jeu de taquin, it will be convenient to use an equivalent formulation from \cite{DPS}, based on certain involutions. 
Let $T \in \inc^q(\lambda)$. The {\bf $K$-Bender-Knuth operator} $\kbk_i$ acts on $T$ by swapping the letters $i$ and $i+1$ everywhere in $T$ \emph{where doing so would not violate the increasingness conditions.} More precisely, $\kbk_i$ looks at the set of boxes labeled $i$ or $i+1$, decomposes that set into edge-connected components, swaps the labels $i$ and $i+1$ in each component that is a single box, and acts trivially on each non-trivial connected component. 
Notice that each $\kbk_i$ is an involution.
Using the characterization of \cite[Proposition~2.4]{DPS}, {\bf $K$-promotion} may then be defined by
\[
\pro^q(T) \coloneqq \kbk_{q-1} \circ \cdots \circ \kbk_1(T).
\]
Examples of the action of $K$-Bender-Knuth operators are shown in Figure~\ref{fig:kbk}, while an example of the full process of $K$-promotion is shown in Figure~\ref{fig:promotion}. 

\begin{figure}[h]
\begin{tikzpicture}
\node (A) {\ytableaushort{\none \none 6,\none 45,1235 } };
\node[above right = 1 and 2 of A] (B) {\ytableaushort{\none \none 6, \none 35, 1245}};
\node[ right = 4 of A] (C) {\ytableaushort{\none \none 6, \none 45,1234}};
\node[below right = 1 and 2 of A] (D) {\ytableaushort{\none \none 6, \none 45,1236}};
\path (A) edge[pilpil,shorten >=-0mm,shorten <=-5mm] node[above]{$\kbk_3$} (B);
\path (A) edge[pilpil] node[above]{$\kbk_4$} (C);
\path (A) edge[pilpil,shorten >=-8mm] node[above]{$\kbk_5$} (D);
\end{tikzpicture}
\caption{The three increasing tableaux on the right are obtained from the increasing tableau at left by applying the indicated $K$-Bender-Knuth operators. The arrows point in both directions since each $K$-Bender-Knuth operator is involutive.}\label{fig:kbk}
\end{figure}
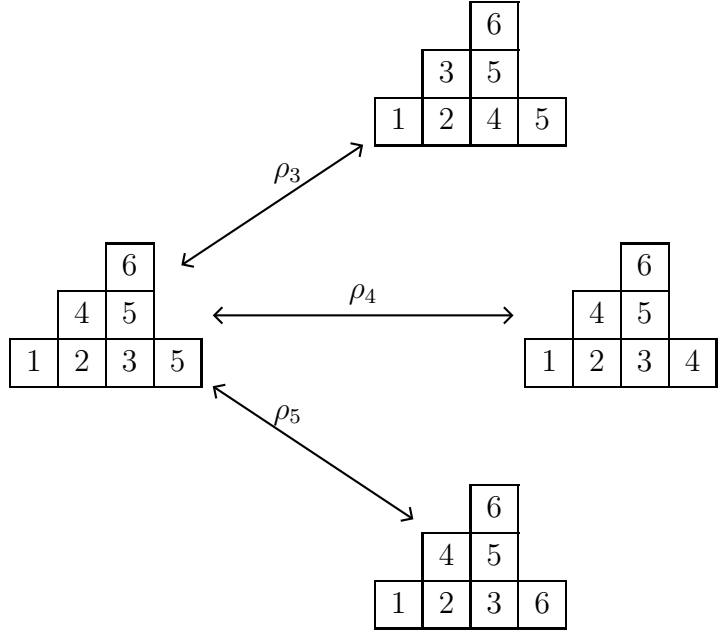

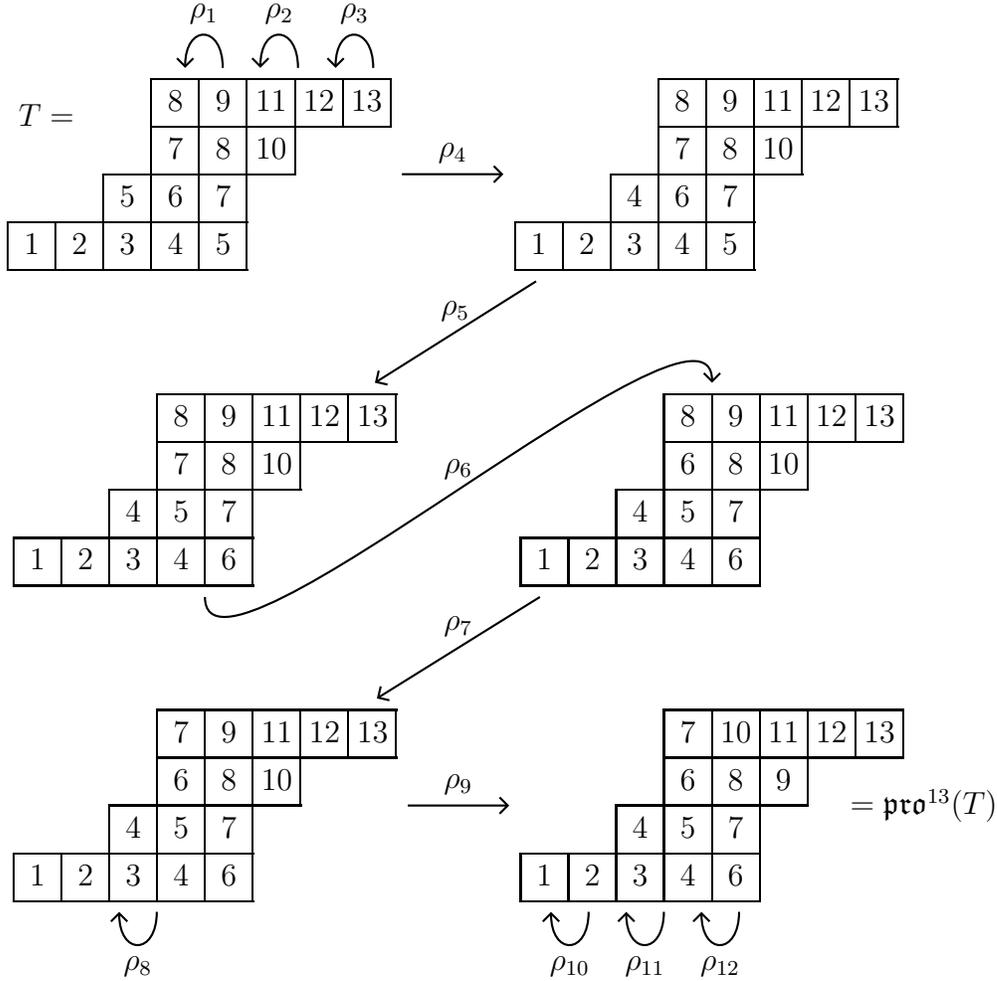
\begin{figure}[h]
\begin{center}
\begin{tikzpicture}
\node (T) {$T= \!\!\!\!\!\!\!$\hspace{-4mm}\ytableaushort{ \none \none \none 89{11}{12}{13},\none \none \none 78{10},\none \none 567, 12345}};
\node[right  = 1.35 of T] (T4) {\ytableaushort{ \none \none \none 89{11}{12}{13}, \none \none \none 78{10}, \none \none 467, 12345}};
\node[below  = 1.35 of T] (T5) {\ytableaushort{   \none \none \none 89{11}{12}{13}, \none \none \none 78{10},\none \none 457,12346}};
\node[right  = 1.35 of T5] (T6) {\ytableaushort{   \none \none \none 89{11}{12}{13}, \none \none \none 68{10},\none \none 457, 12346 }};
\node[below  = 1.35 of T5] (T7) {\ytableaushort{   \none \none \none 79{11}{12}{13},\none \none \none 68{10},\none \none 457, 12346 }};
\node[right  = 1.35 of T7] (T9) {\ytableaushort{  \none \none \none 7{10}{11}{12}{13},\none \none \none 689, \none \none 457, 12346 }};
\node[right = -1 of T9] (finishlabel) {$=\pro^{13}(T)$};
\node[below left = -.3 and -0.7 of T9] (fake1) {};
\node[below left = -.3 and -1.2 of T9] (fake2) {};
\node[below left = -.3 and -1.7 of T9] (fake3) {};
\node[below left = -.3 and -2.2 of T9] (fake4) {};
\node[below left = -.3 and -2.7 of T9] (fake5) {};
\node[below left = -.3 and -3.2 of T9] (fake6) {};
\node[above left = -.3 and -2.5 of T] (fake7) {};
\node[above left = -.3 and -3.0 of T] (fake8) {};
\node[above left = -.3 and -3.5 of T] (fake9) {};
\node[above left = -.3 and -4.0 of T] (fake10) {};
\node[above left = -.3 and -4.5 of T] (fake11) {};
\node[above left = -.3 and -5.0 of T] (fake12) {};
\node[below left = -.3 and -1.7 of T7] (fake13) {};
\node[below left = -.3 and -2.2 of T7] (fake14) {};
\path (T) edge[pil]  node[above]{$\rho_4$} (T4);
\path (T4) edge[pil] node[above]{$\rho_5$} (T5);
\draw[->, thick] (T5.south) .. node[above]{$\rho_6$} controls ([yshift=-3cm] T5) and ([yshift=3cm] T6) .. (T6.north);
\path (T6) edge[pil] node[above]{$\rho_7$} (T7);
\path (fake14) edge[pil,out=270,in=270,looseness=3] node[below]{$\rho_8$} (fake13);
\path (T7) edge[pil] node[above]{$\rho_9$} (T9);
\path (fake2) edge[pil, out=270, in=270, looseness=3] node[below]{$\rho_{10}$} (fake1);
\path (fake4) edge[pil, out=270, in=270, looseness=3] node[below]{$\rho_{11}$} (fake3);
\path (fake6) edge[pil, out=270, in=270, looseness=3] node[below]{$\rho_{12}$} (fake5);
\path (fake8) edge[pil, out=90, in=90, looseness=3] node[above]{$\rho_{1}$} (fake7);
\path (fake10) edge[pil, out=90, in=90, looseness=3] node[above]{$\rho_{2}$} (fake9);
\path (fake12) edge[pil, out=90, in=90, looseness=3] node[above]{$\rho_{3}$} (fake11);
\end{tikzpicture}
\end{center}
\caption{The calculation of the $K$-promotion $\pro^{13}(T)$ of the increasing tableau $T$ (on the Cayley-Moufang poset) through the action of the sequence of $K$-Bender-Knuth operators $\rho_1, \dots, \rho_{12}$.}\label{fig:promotion}
\end{figure}

For the remainder of this section, we assume that $\lambda$ is a miniscule poset. In particular, $\lambda$ has a unique maximal element.
\begin{proposition}\label{prop:deflation_commutation}
Let $T \in \inc^m(\lambda)$. If $\content^m(T) \langle 1 \rangle = 1$, then
\begin{equation}\label{eq:deflation_commutation}
\deflate^m \circ \pro^m(T) = \pro^{m_T} \circ \deflate^m(T).
\end{equation}
If $\content^m(T) \langle 1 \rangle = 0$, then $\pro^m$ decrements each label of $T$ by one. 
\end{proposition}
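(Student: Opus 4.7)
The plan is to analyze each of the $K$-Bender-Knuth operators in the factorization $\pro^m = \kbk_{m-1}\circ\cdots\circ\kbk_1$, using throughout the elementary observation that adjacent cells of an increasing tableau carry strictly different labels, so every connected component of cells labeled $i$ or $i+1$ is a bipartite alternating strip. For the second assertion (when no cell of $T$ is labeled $1$), I would argue by induction on $i$ that before applying $\kbk_i$ in the promotion sequence, no cell carries label $i$: the base case is the hypothesis, and in the inductive step every $(i+1)$-labeled cell is then a singleton $\{i,i+1\}$-component, so $\kbk_i$ converts each $(i+1)$ into $i$. Iterating yields the claimed decrement-by-one description of $\pro^m(T)$.

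For the first assertion, I would first establish the \emph{content lemma} $\content^m(\kbk_i T') = s_i\cdot\content^m(T')$ for every $T' \in \inc^m(\lambda)$, where $s_i$ swaps coordinates $i$ and $i+1$. The only nontrivial case is when both labels $i$ and $i+1$ are already present in $T'$: any non-singleton $\{i,i+1\}$-component must contain at least one cell of each label (and is fixed by $\kbk_i$), while if all such components are singletons the populations of $i$ and $i+1$ merely exchange, so both labels persist. Setting $T_j := \kbk_j\circ\cdots\circ\kbk_1(T)$ and $v_j := \content^m(T_j)$, iteration of the content lemma together with $c_1 = 1$ yields $v_{i-1}\langle i\rangle = 1$ for all $i \geq 1$, while $v_{i-1}\langle i+1\rangle = c_{i+1}$ since coordinate $i+1$ is untouched by $s_1,\ldots,s_{i-1}$. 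Next, I would show that $\kbk_i$ acting on $T_{i-1}$ has one of two effects on its deflation: if $c_{i+1} = 0$, then $\kbk_i$ merely relabels each $i$ as $i+1$, which shifts the distinct-label set but leaves $\deflate^m(T_{i-1})$ unchanged; if $c_{i+1} = 1$, then the $\{i,i+1\}$-cells of $T_{i-1}$ coincide (as a subset of $\lambda$, with their singleton/non-singleton partition) with the $\{k_i,k_i+1\}$-cells of $\deflate^m(T_{i-1})$ for $k_i := \sum_{j\leq i} v_{i-1}\langle j\rangle$, so the induced action is precisely $\kbk_{k_i}$ on the deflation.

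Listing the positions $p_1 < p_2 < \cdots < p_{m_T}$ where $c_{p_\ell} = 1$ (so $p_1 = 1$), the steps $i$ with $c_{i+1} = 1$ are exactly $p_2 - 1, p_3 - 1, \ldots, p_{m_T} - 1$, and a short computation gives $k_i = \ell - 1$ at $i = p_\ell - 1$. Composing these deflation operators in order of increasing $i$ (equivalently, increasing $\ell$) produces $\kbk_{m_T - 1}\circ\cdots\circ\kbk_1 = \pro^{m_T}$ applied to $\deflate^m(T)$, which is the desired commutation. I expect the main technical hurdle to be the content lemma in the doubly-populated case: the argument depends on carefully invoking strict row and column increase to ensure that every non-singleton $\{i,i+1\}$-component is bicolored, so that $\kbk_i$ cannot eliminate either label.
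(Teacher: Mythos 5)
Your argument is correct, and it reaches the commutation by a route that is organized differently from the paper's. You proceed one $K$-Bender-Knuth operator at a time: your content lemma $\content^m(\kbk_i T') = s_i\cdot\content^m(T')$ (with $s_i$ transposing coordinates $i$ and $i+1$; this is the single-step version of the content-rotation fact \cite[Lemma~2.1]{DPS}, which the paper only invokes later, in Section~\ref{sec:period}) guarantees that label $i$ is present just before $\kbk_i$ is applied, and then each step either leaves the deflation unchanged (when $i+1$ is absent) or descends to $\kbk_{k_i}$ on the deflation (when $i+1$ is present), because deflation preserves the relevant cell set together with its edge-connected-component structure. Your bookkeeping is right: $k_{p_\ell-1}=\ell-1$ and the interaction steps occur at $i=p_2-1,\dots,p_{m_T}-1$ in increasing order, so the deflation picks up exactly $\kbk_1,\dots,\kbk_{m_T-1}$, i.e.\ $\pro^{m_T}$. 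The paper instead groups the operators into batches between consecutive labels: its key Lemma~\ref{lem:bullet_placement} treats $\kbk_{i_{r+1}-1}\circ\cdots\circ\kbk_{i_r}$ at once, proving the descent to $\kbk_r$ by a four-way case analysis on covering relations between $i_r$- and $i_{r+1}$-cells, and the proof of Proposition~\ref{prop:deflation_commutation} then runs an induction that re-inflates at each stage via Proposition~\ref{prop:compressbijective} and disposes of the trailing operators $\kbk_{i_{m_T}},\dots,\kbk_{m-1}$ by tracking the label of the unique maximal box of $\lambda$. What your version buys: it avoids the inflation/re-inflation formalism inside the proof and never uses that $\lambda$ has a unique maximal element (your case with $i+1$ absent handles the trailing operators uniformly), so it establishes the statement for arbitrary shapes, not just minuscule ones. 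What the paper's batching buys is an explicit record of how the label set evolves (the last sentence of Lemma~\ref{lem:bullet_placement}), which its proof then reuses to identify the intermediate content vectors $v_r$ and the label of the maximal box. Your treatment of the case $\content^m(T)\langle 1\rangle=0$ coincides with the paper's.
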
 
Note that, by Proposition~\ref{prop:deflation_commutation}, the behavior of $\pro^m$ on $T \in \inc^m(\lambda)$ is fundamentally different depending on whether or not the label $1$ appears in $T$. This distinction will lead to interestingly complicated counting formulas in Sections~\ref{sec:period} and~\ref{sec:arithmetic}. In order to prove Proposition~\ref{prop:deflation_commutation}, we will first need the following lemma.

\begin{lemma} \label{lem:bullet_placement}
Let $T \in \inc^m(\lambda)$.
Let the ordered set of labels of $T$ be \[ \lbrace i_1 < i_2 < \dots < i_{m_T-1} < i_{m_T} \rbrace.\] (In the notation above, $i_j = \inflate^m_{\content^m(T)}(j)$.) Then, for all  $1 \leq r < m_T$,
\[ \deflate^m \circ \kbk_{i_{r+1}-1} \circ \cdots \circ  \kbk_{i_r}(T) = \kbk_{r} \circ \deflate^m(T). \]
In addition, the ordered set of labels of $\kbk_{i_{r+1}-1} \circ \cdots \circ  \kbk_{i_r}(T)$ is \[ \lbrace i_1< \dots < i_{r-1} < i_{r+1}-1< i_{r+1}< \dots < i_{m_T} \rbrace.\] 
\end{lemma}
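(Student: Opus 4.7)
The plan is to run through the sequence $\kbk_{i_r}, \kbk_{i_r+1}, \dots, \kbk_{i_{r+1}-1}$ one operator at a time and to exploit the structural observation that none of the integers $i_r+1, i_r+2, \dots, i_{r+1}-1$ appears as a label of $T$, by the definition of $i_r$ and $i_{r+1}$ as consecutive elements of the range of $T$.

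First, I will handle the initial operators $\kbk_{i_r}, \dots, \kbk_{i_{r+1}-2}$ by a straightforward induction on $j$. At the stage immediately before applying $\kbk_j$ for $i_r \leq j \leq i_{r+1}-2$, the boxes labeled $j$ are exactly the original $i_r$-boxes of $T$ (incrementally shifted up from $i_r$ to $j$) and there are no $(j+1)$-labeled boxes. Hence each $j$-labeled box is an isolated single-box component in the $\{j,j+1\}$-subdiagram, and so $\kbk_j$ swaps it to $j+1$. After all these intermediate operators, we obtain an auxiliary tableau $\widetilde T$ in which the original $i_r$-boxes of $T$ now carry the label $i_{r+1}-1$, while every other label agrees with $T$. (In the edge case $i_{r+1} = i_r + 1$, this phase is simply empty.)

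Next, I will analyze the final operator $\kbk_{i_{r+1}-1}$ applied to $\widetilde T$. Since $\widetilde T$ is strictly increasing, no two adjacent boxes share a label, so each connected component of the subdiagram of boxes labeled $i_{r+1}-1$ or $i_{r+1}$ alternates between the two labels; $\kbk_{i_{r+1}-1}$ swaps single-box components and fixes multi-box ones. For the label-set claim, I will show that both $i_{r+1}-1$ and $i_{r+1}$ appear in $T' \coloneqq \kbk_{i_{r+1}-1}(\widetilde T)$ by case analysis: if a multi-box component exists, both labels are realized inside it; if every component is a single box, the swap converts all original $i_{r+1}$-boxes to $i_{r+1}-1$ and all original $i_r$-boxes (now bearing $i_{r+1}-1$) to $i_{r+1}$, and since $1 \leq r$ and $r+1 \leq m_T$ both populations are nonempty. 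Labels outside $\{i_r,\dots,i_{r+1}\}$ are untouched by the entire chain, so the full label set of $T'$ is exactly as claimed.

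Finally, for the deflation commutation, I will compare the action of $\kbk_{i_{r+1}-1}$ on $\widetilde T$ with the action of $\kbk_r$ on $\deflate^m(T)$. Because $\deflate^m$ preserves box positions and only relabels by rank, the subset of boxes of $\widetilde T$ carrying a label in $\{i_{r+1}-1, i_{r+1}\}$ coincides with the subset of boxes of $\deflate^m(T)$ carrying a label in $\{r, r+1\}$, and their connected-component decompositions agree. In both settings single-box components swap while multi-box components are fixed, so after applying $\deflate^m$ to $T'$ the resulting tableau agrees with $\kbk_r\circ\deflate^m(T)$ box by box; the labels outside the affected range match by the previous paragraph together with the fact that the relative rank ordering among them is preserved. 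The main obstacle is precisely this last piece of bookkeeping: verifying that the single-box versus multi-box classification truly matches between the two settings despite the differing label sets. This amounts to checking that the $K$-Bender-Knuth dichotomy depends only on the positions of the two relevant labels and not on the labels themselves, which is intuitively clear but warrants careful attention.
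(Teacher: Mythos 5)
Your proposal is correct and follows essentially the same route as the paper's proof: the intermediate operators $\kbk_{i_r},\dots,\kbk_{i_{r+1}-2}$ merely increment the original $i_r$-boxes (no intermediate labels being present), and the final $\kbk_{i_{r+1}-1}$ is matched with $\kbk_r$ on $\deflate^m(T)$ because the relevant boxes and their adjacencies coincide. The paper phrases this last step as an explicit case analysis on covering relations rather than on connected components, but the content is the same, and your "careful bookkeeping" point is immediate since $K$-Bender-Knuth operators depend only on the positions of the two relevant label classes.
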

\begin{proof} 

Fix $1 \leq r < m_T$. For $s \geq i_r$, we use the shorthand 
\[
T_{s} \coloneqq \kbk_{s-1} \circ \cdots \circ  \kbk_{i_r}(T);
\] in particular, $T_{i_r} = T$. 

Let $R \subseteq \lambda$ be the set of boxes $\z \in \lambda$ such that $T(\z) = i_r$. Consider $\x \in R$.
Either 
\begin{itemize}
\item[(1-A)] $\x$ is covered by at least one box labeled $i_{r+1}$ in $T$ or
\item[(1-B)] it is not.
\end{itemize}  In either case, observe that for $i_r \leq \ell  < i_{r+1}-1$, $\kbk_{\ell}$ acts on $T_\ell$ by changing the label of each $\z \in R$ from $\ell$ to $\ell+1$ and leaving all other labels unchanged, since $T_\ell$ has no boxes labeled $\ell+1$. In the case (1-A),  \[ T_{i_{r+1}}(\x) = \kbk_{i_{r+1}-1}(T_{i_{r+1}-1})(\x) = i_{r+1}-1.\]
In the case (1-B), \[ T_{i_{r+1}}(\x) = \kbk_{i_{r+1}-1}(T_{i_{r+1}-1})(\x) = i_{r+1}.\] 

Now let $\y \in \lambda$ be a box with $T(\y) = i_{r+1}$. Either
\begin{itemize}
\item[(2-A)] $\y$ covers at least one box labeled $i_{r}$ in $T$ or 
\item[(2-B)] it does not.
\end{itemize}
In either case, recall that for $i_r \leq \ell  < i_{r+1}-1$, the label of $\y$ is unchanged by the action of $\kbk_{\ell}$ on $T_\ell$. In the case (2-A), 
 \[ T_{i_{r+1}}(\y) = \kbk_{i_{r+1}-1}(T_{i_{r+1}-1})(\y) = i_{r+1}.\] 
In the case (2-B), 
 \[ T_{i_{r+1}}(\y) = \kbk_{i_{r+1}-1}(T_{i_{r+1}-1})(\y) = i_{r+1}-1.\] 
 
All other boxes are unaffected by $\kbk_\ell$ for $\ell$ in this range. Since there  must be either an $\x \in R$ satisfying (1-A) or a $\y \in \lambda$ satisfying (2-B),  the ordered set of entries of $T_{i_{r+1}}$ is $\lbrace i_1 < \dots < i_{r-1} < i_{r+1}-1 < i_{r+1} < \dots < i_{m_T} \rbrace$, proving the last sentence of the lemma. 

Now therefore, if $\x$ satisfies (1-A), then
\[ \deflate^m (T_{i_{r+1}})(\x) = r, \] 
while if $\x$ satisfies (1-B), then
\[ \deflate^m (T_{i_{r+1}})(\x) = r+1. \] 
Similarly, if $\y$ satisfies (2-A), then
\[ \deflate^m (T_{i_{r+1}})(\y) = r+1, \] 
while if $\y$ satisfies (2-B), 
\[ \deflate^m (T_{i_{r+1}})(\y) = r. \]
It is easily seen that these values match $\kbk_{r} \circ \deflate^m(T)(\x)$ and $\kbk_{r} \circ \deflate^m(T)(\y)$ in each case. Since all boxes not satisfying one of these four cases are unchanged by the $K$-Bender-Knuth operators in question, this completes the proof of the lemma. 
\end{proof}

We are now ready to prove Proposition~\ref{prop:deflation_commutation}. 
\begin{proof}[Proof of Proposition~\ref{prop:deflation_commutation}]
Let the ordered set of labels of $T \in \inc^m(\lambda)$ be \[ \lbrace i_1 < i_2 < \dots < i_{m_T-1} < i_{m_T} \rbrace.\] (That is, $i_j = \inflate^m_{\content^m(T)}(j)$.) 

 Let $v_r$ be the content vector corresponding to the ordered set 
 \[
 \lbrace i_2-1 < i_3-1 < \dots < i_{r} - 1 < i_{r+1}-1 < i_{r+1} < \dots < i_{m_T} \rbrace.
 \]
  That is, $v_r$ is the binary vector of length $m$ with $1$s in exactly these positions. 
 
First, suppose that $\content^m(T) \langle 1 \rangle = 1$, so $i_1 = 1$. Then by Lemma~\ref{lem:bullet_placement}, 
\[
\deflate^m \circ \kbk_{i_{2}-1} \circ \cdots \circ  \kbk_{1}(T) = \kbk_1 \circ \deflate^m(T).
\]
Hence by the proof of Proposition~\ref{prop:compressbijective}, 
\[  \kbk_{i_{2}-1} \circ \cdots \circ  \kbk_{1}(T) = \inflate_{v_1}^m \circ (\kbk_1 \circ \deflate^m(T)),\] since $v_1 = \content^m(\kbk_{i_2 - 1} \circ \cdots \circ \kbk_1(T))$. 
By induction using Lemma~\ref{lem:bullet_placement} and the proof of Proposition~\ref{prop:compressbijective}, we then see that 
\begin{align} 
\kbk_{i_{m_T}-1} \circ \cdots \circ  \kbk_{1}(T) &= \inflate^m_{v_{(m_T-1)}} \circ \kbk_{m_T-1}  \circ \deflate^m \circ \cdots \circ \inflate^m_{v_1} \circ \kbk_1 \circ \deflate^m(T) \notag \\ 
&= \inflate^m_{v_{(m_T-1)}} \circ \kbk_{m_T-1}   \circ  \cdots \circ \kbk_1 \circ \deflate^m(T), \label{eq:defldefl}
\end{align} 
since $v_r = \content^m(\kbk_{i_{r+1} - 1} \circ \cdots \circ \kbk_1(T))$.
If $m = i_{m_T}$, we are done after applying $\deflate^m$ to both sides of Equation~\eqref{eq:defldefl}. Otherwise, Lemma~\ref{lem:bullet_placement} implies that the unique maximal element of $\lambda$ is labeled with $i_{m_T}$ in $\kbk_{i_{m_T}-1} \circ \cdots \circ  \kbk_{1}(T)$, so $\kbk_{i_{m_T}}$ increments the maximal box to $i_{m_T}+1$. By induction, $\kbk_\ell$ increments this same box to $i_{\ell}+1$ for $\ell = m_T,m_T+1, \dots, m-1$. Therefore, \[ \deflate^m \circ \kbk_{m-1} \circ \cdots \circ \kbk_{1} (T) = \deflate^m \circ \kbk_{i_{m_T}-1} \circ \cdots \circ \kbk_{1} (T).\] This completes the proof in the case $\content^m(T) \langle 1 \rangle = 1$. 

Now, suppose that $\content^m(T) \langle 1 \rangle = 0$, so $i_1  > 1$. Then $\kbk_{r}$ acts trivially on $\kbk_{r-1} \circ \cdots \circ \kbk_{1}(T)$ for $r < i_1 -1$. Next, $\kbk_{i_1-1}$ decrements all boxes labeled $i_1$ by one, since there are no adjacent boxes labeled $i_1-1$. But then there are no boxes labeled $i_1$, so $\kbk_{i_1}$ decrements all boxes labeled $i_1+1$ by one. Inductively, it follows that $\kbk_{r}$ decrements boxes labeled $r+1$ by one for $i_1-1 \leq r < m_T$ and leaves other boxes unchanged. It is then easy to see that $\kbk_r$ acts trivially for $r \geq m_T$ and the proposition follows.
\end{proof} 

\section{Computation of period}\label{sec:period} In this section, we use Proposition~\ref{prop:deflation_commutation} to relate the period of $T \in \inc^m(\lambda)$ under $\pro^m$ to data concerning $\deflate^m(T)$. Let 
\[\Sigma^m : \lbrace 0,1\rbrace^m \rightarrow \lbrace 0,1\rbrace^m\]
 be the cyclic rotation defined by 
 \[
 \Sigma^m(v_1, \dots, v_m) = (v_2, \dots, v_m, v_1)
 \]
  for $(v_1, \dots, v_m) \in \lbrace 0,1 \rbrace^m$. 
  
\begin{theorem}\label{thm:periodthm}
Fix $T \in \inc^m(\lambda)$. Let $\tau$ be the the period of $\pro^{m_T}$ on $\deflate^m(T)$ and $\ell$ be the period of $\Sigma^m$ on $\content^m(T)$. Then, the period of $\pro^m$ on $T$ is \[\frac{\ell  \tau}{\mathrm{gcd}(\ell m_T / m,\tau)}. \]
\end{theorem}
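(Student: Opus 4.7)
The plan is to combine Proposition~\ref{prop:deflation_commutation} with a single additional observation about how $\content^m$ transforms under $\pro^m$. First, I would establish that in both cases of Proposition~\ref{prop:deflation_commutation},
\[
\content^m(\pro^m(T)) = \Sigma^m(\content^m(T)).
\]
When $\content^m(T)\langle 1 \rangle = 0$, this is immediate, since $\pro^m$ uniformly decrements every label by $1$ and no wrap-around occurs. When $\content^m(T)\langle 1 \rangle = 1$, the last paragraph of the proof of Proposition~\ref{prop:deflation_commutation}, together with Lemma~\ref{lem:bullet_placement}, shows that the ordered label set of $\pro^m(T)$ is $\{i_2-1 < i_3-1 < \cdots < i_{m_T}-1 < m\}$, which is precisely the support of the cyclic left shift of $\content^m(T)$.

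Next, I would transport the dynamics of $\pro^m$ through the bijection $\compress^m$ of Proposition~\ref{prop:compressbijective}. The content-equivariance above, together with Proposition~\ref{prop:deflation_commutation} and the easy observation that $\deflate^m$ is insensitive to uniform decrementation of labels, identifies the conjugate of $\pro^m$ under $\compress^m$ as the map
\[
f \colon (S,v) \longmapsto
\begin{cases}
(\pro^{m_T}(S),\, \Sigma^m(v)) & \text{if } v\langle 1 \rangle = 1, \\
(S,\, \Sigma^m(v)) & \text{if } v\langle 1 \rangle = 0,
\end{cases}
\]
on $\incgl^{m_T}(\lambda) \times \binom{[m]}{m_T}$. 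Thus each application of $\pro^m$ cyclically shifts the content vector and additionally applies $\pro^{m_T}$ to the deflation whenever the leading entry of the current content vector is $1$.

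Finally, I would compute the period of $f$ at $(\deflate^m(T), \content^m(T))$ directly. The second coordinate forces the period $N$ to be a multiple of $\ell$, so write $N = a\ell$. Over $\ell$ consecutive iterations, the first coordinate receives $(\pro^{m_T})^b$, where $b$ counts the $1$s appearing as leading entries of $v, \Sigma^m(v), \dots, (\Sigma^m)^{\ell-1}(v)$. Since $v$ has period $\ell$ under $\Sigma^m$ and contains $m_T$ ones over its full length $m$, periodicity gives $b = m_T\ell/m$, which is a positive integer because $\ell \mid m$ and $m/\ell \mid m_T$. Therefore $f^{a\ell}(S,v) = ((\pro^{m_T})^{a \cdot m_T\ell/m}(S),\, v)$, and we need $\tau \mid a \cdot m_T\ell/m$. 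The smallest such $a$ is $\tau/\gcd(\tau, m_T\ell/m)$, giving period $N = \ell\tau / \gcd(\ell m_T/m,\, \tau)$, as claimed.

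The most delicate step is confirming the $\content^m$-equivariance in the $v\langle 1 \rangle = 1$ case, since one must pinpoint exactly where the originally maximal label lands after the full cascade of $K$-Bender-Knuth operators; I would extract this tracking as an explicit preliminary lemma from Lemma~\ref{lem:bullet_placement} before assembling the product-space argument and the divisibility computation above.
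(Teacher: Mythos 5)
Your argument is correct and follows essentially the same route as the paper: you conjugate $\pro^m$ through the bijection $\compress^m$ to the map you call $f$ (the paper's $K^m$), and then extract the period by the same divisibility analysis, with your count of $1$s in a block of $\ell$ leading entries playing exactly the role of the paper's $\ell - r = \ell m_T/m$. The only differences are cosmetic: the paper cites \cite[Lemma~2.1]{DPS} for the content-rotation identity $\content^m \circ \pro^m = \Sigma^m \circ \content^m$, which you instead rederive from Lemma~\ref{lem:bullet_placement} and the proof of Proposition~\ref{prop:deflation_commutation} (correctly flagging that this tracking deserves its own lemma), and your block-of-$\ell$-iterations computation replaces the paper's bookkeeping with $n = t\ell = s\tau + tr$.
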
 
    
  \begin{proof}
Define
  \[
  K^m: \coprod_{n \leq m}\incgl^n(\lambda) \times \binom{[m]}{n} \rightarrow \coprod_{n \leq m}\incgl^n(\lambda) \times \binom{[m]}{n}
  \] by
\[
K^m(S,v) =
\begin{cases}
    \big( \pro^{m_S}(S),\Sigma^m(v) \big),  & \text{if } v\langle1\rangle = 1; \\        
   \big( S,\Sigma^m(v) \big), & \text{otherwise.}
\end{cases}
\]
We first show that, for $T \in \inc^m(\lambda)$,
\begin{equation}\label{eq:k_commutes}
\compress^m \circ \pro^m(T) = K^m \circ \compress^m(T).
\end{equation}
Let $\pi_1$ be the projection onto the first factor of $\coprod_{n \leq m}\incgl^n(\lambda) \times \binom{[m]}{n}$ and $\pi_2$ the projection onto the second factor. It is immediate from the definitions that \[ \pi_2(K^m \circ \compress^m(T)) = \Sigma^m \circ \content^m(T).\] But by \cite[Lemma~2.1]{DPS},
\[  \Sigma^m \circ \content^m(T) = \content^m \circ \pro^m(T) = \pi_2(\compress^m \circ \pro^m(T)).\] Thus, the two sides of Equation~(\ref{eq:k_commutes}) are equal in the second factor.  

We now show that Equation~(\ref{eq:k_commutes}) holds in the first factor. Let \[ w \coloneqq \content^m(T) = \pi_2(\compress^m(T)). \]  By the proof of Proposition~\ref{prop:compressbijective}, we can write
\[ T = \tinflate^m(\deflate^m(T), w).\] 
If $w\langle 1 \rangle = 1 $, then
\begin{align*}
\pi_1(\compress^m \circ \pro^m(T)) &= \deflate^m \circ \pro^m(T) \\
&= \pro^{m_T} \circ \deflate^m (T) &\text{(by Proposition~\ref{prop:deflation_commutation}, $w\langle 1 \rangle = 1$)} \\
&= \pi_1(K^m \circ \compress^m(T)) &\text{(because $w\langle 1 \rangle = 1$).}
\end{align*}
On the other hand, if $w \langle 1 \rangle = 0$, then 
\begin{align*}
\pi_1(\compress^m \circ \pro^m(T)) &= \deflate^m \circ \pro^m(T) \\
&= \deflate^m(T) &\text{(by Proposition~\ref{prop:deflation_commutation}, $w \langle 1 \rangle = 0$)} \\
&= \pi_1 ( K^m \circ \compress^m(T)) &\text{(because $w\langle 1 \rangle = 0$).}
\end{align*}
This completes the proof of Equation~(\ref{eq:k_commutes}).

Now, since $\compress^m$ is a bijection by Proposition~\ref{prop:compressbijective}, $(\pro^m)^{\circ n}(T) = T$ exactly when $\compress^m \circ (\pro^m)^{\circ n}(T) = \compress^m \circ T$. So, by Equation~(\ref{eq:k_commutes}), we are reduced to determining which powers $n$ of $K^m$ stabilize $\compress^m(T)$.

Say $(K^m)^{\circ n}(\compress^m(T)) = \compress^m(T)$. Applying $\pi_2$ to both sides, we have \[ (\Sigma^m)^{\circ n}(\content^m(T)) = \content^m(T), \] so $n$ is a multiple of $\ell$. Hence, $n = t \ell$ for some integer $t \in \mathbb{Z}$.  Applying instead $\pi_1$ to both sides, we have \[ (\pro^m)^{\circ n'}(\deflate^m(T)) = \deflate^m(T), \] where 
\begin{align}\label{eq:n'}
n' &\coloneqq n - \# \{ 0 \leq j \leq n-1 : \pi_2 ((K^m)^{\circ j} \circ \compress^m(T)) \langle 1 \rangle = 0\} \\
&= n - \#  \{ 1 \leq i \leq n : \content^m(T) \langle i \mod m \rangle = 0\}. \nonumber
\end{align}
Let $r$ be the number of $0$s among the first $\ell$ entries of $\content^m(T)$. Then, $n' = t(\ell - r)$. Since $(\pro^m)^{\circ n'}$ fixes $\deflate^m(T)$, $n'$ is a multiple of $\tau$. Hence, $n' = s \tau$ for some integer $s$, and so $n = n' + tr = s\tau + tr$.

Thus, we have $(K^m)^{\circ n}  \circ \compress^m(T) = \compress^m(T)$ if and only if $n = t \ell = s \tau + t  r$ for some integers $s$ and $t$. Hence, the period of $\pro^m$ on $T$ is the least positive integer $h$ that can be simultaneously written in the forms $h = t \ell$ and $h=s \tau + t r$ for the same value of $t$. But $t \ell = s\tau + tr$ implies $t(\ell-r) = s\tau$. Since $0 \leq r < \ell$, the least such $h$ is clearly achieved when $t$ is minimal, i.e.\ when $t(\ell-r)$ is the least common multiple of $\ell-r$ and $\tau$. Thus, $h$ can be expressed as 
\begin{equation}\label{eq:period}
h = t \ell = \frac{\ell \, \text{lcm}(\ell-r,\tau)}{\ell-r} = \frac{\ell \tau}{\text{gcd}(\ell-r,\tau)}.
\end{equation}

Finally, observe that $r = \frac{\ell (m-m_T)}{m}$, so that \[ \ell - r = \frac{\ell m_T}{m}.\]
Therefore,
\[ h = \frac{\ell \tau}{\text{gcd}(\frac{\ell m_T}{m},\tau)}, \]
as desired. 
\end{proof}

The following corollary will allow us to determine the period of $\pro^m$ on $\inc^m(\lambda)$ in Section~\ref{sec:arithmetic}. 

\begin{corollary}\label{corr:pdbound}
In the notation of Theorem~\ref{thm:periodthm}, suppose $j$ is a positive integer such that $\tau$ divides $j m_T$. Then, the period $h$ of $\pro^m$ on $T$ divides $j m$. 
\end{corollary}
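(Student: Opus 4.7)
The plan is to bypass the explicit formula of Theorem~\ref{thm:periodthm} and argue directly with the semiconjugacy $\compress^m \circ \pro^m = K^m \circ \compress^m$ established in Equation~\eqref{eq:k_commutes}. Since $\compress^m$ is bijective (Proposition~\ref{prop:compressbijective}), $h$ coincides with the period of $K^m$ on $\compress^m(T)$, and the set of $n$ satisfying $(K^m)^{\circ n}(\compress^m(T)) = \compress^m(T)$ is the subgroup $h\mathbb{Z}$. Thus $h \mid jm$ will follow once we exhibit the single equality $(K^m)^{\circ jm}(\compress^m(T)) = \compress^m(T)$.

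I would verify this equality one coordinate at a time. In the second coordinate, we need $(\Sigma^m)^{\circ jm}(\content^m(T)) = \content^m(T)$. Since $(\Sigma^m)^{\circ m} = \id$, the period $\ell$ of $\Sigma^m$ on $\content^m(T)$ divides $m$, and so $jm$ is already a multiple of $\ell$; this settles the second coordinate. For the first coordinate, recall from the proof of Theorem~\ref{thm:periodthm} that $n$ applications of $K^m$ amount to applying $\pro^{m_T}$ to $\deflate^m(T)$ exactly $n'$ times, where $n'$ is given by Equation~\eqref{eq:n'}. Taking $n = jm$ traverses $\content^m(T)$ exactly $j$ full times, so each of its $m_T$ ones is encountered $j$ times, giving $n' = jm_T$. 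By hypothesis $\tau \mid jm_T$, so $(\pro^{m_T})^{\circ n'}$ fixes $\deflate^m(T)$.

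Combining the two coordinates yields $(K^m)^{\circ jm}(\compress^m(T)) = \compress^m(T)$, and therefore $h \mid jm$. I do not foresee any substantive obstacle: essentially all of the needed machinery was already built during the proof of Theorem~\ref{thm:periodthm}. The only delicate point is correctly reading off $n' = jm_T$ from Equation~\eqref{eq:n'}, which in turn relies on the observation $\ell \mid m$ so that $jm$ represents an integer number of complete rotations of $\content^m(T)$.
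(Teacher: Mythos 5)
Your argument is correct, but it follows a genuinely different route from the paper's. The paper proves the corollary by pure arithmetic on the closed formula of Theorem~\ref{thm:periodthm}: it first checks the case $jm_T=\tau$, where $h=\frac{\ell\tau}{\gcd(\ell m_T/m,\tau)}$ collapses to exactly $jm$ (using $\gcd(\ell m_T/m,\,jm_T)=\ell m_T/m$), and then handles general $\tau\mid jm_T$ by a divisibility comparison of the two gcd expressions. You instead bypass the formula and return to the semiconjugacy $\compress^m\circ\pro^m=K^m\circ\compress^m$ of Equation~\eqref{eq:k_commutes}, exhibiting directly that $(K^m)^{\circ jm}$ fixes $\compress^m(T)$: after $jm$ steps the content vector has undergone $j$ full rotations (so the second coordinate returns, since $(\Sigma^m)^{\circ m}=\mathrm{id}$ — your appeal to $\ell\mid m$ is not even needed), and the first coordinate has been promoted exactly $n'=jm_T$ times by the count in Equation~\eqref{eq:n'}, which is a multiple of $\tau$ by hypothesis; bijectivity of $\compress^m$ (Proposition~\ref{prop:compressbijective}) and of $\pro^m$ then give $h\mid jm$. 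Two remarks on the comparison: your dynamical argument is more self-contained in spirit (it needs only the commutation relation and the step-count, both established inside the proof of Theorem~\ref{thm:periodthm}, not its final gcd formula), and it makes the mechanism transparent; the paper's proof is shorter given the theorem and, in the special case $jm_T=\tau$, yields the sharper statement that the period equals $jm$, which your divisibility-only argument does not recover. One point you use implicitly (as does the paper) is that $K^m$ applied repeatedly keeps promoting the \emph{same} gapless tableau with the \emph{same} label count, i.e.\ that $\pro^{m_T}$ preserves $\incgl^{m_T}(\lambda)$; this follows from the content-rotation property cited in the proof of Theorem~\ref{thm:periodthm}, and is worth a sentence if you write this up.
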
 
\begin{proof}
First, say $j m_T = \tau$. Then,
\begin{equation}\label{eq:pdbound} h=  \frac{\ell \tau}{\text{gcd}(\frac{\ell m_T}{m},\tau)} = \frac{\ell \cdot j m_T}{\text{gcd}(\frac{\ell m_T}{m},j m_T)} = \frac{\ell \cdot j m_T}{\frac{\ell m_T}{m}} = jm. 
\end{equation}
Now, say $\tau$ divides $jm_T$. Then, \[ \frac{\tau}{\text{gcd}(\frac{\ell m_T}{m},\tau)} \ \ \  \text{    divides   } \ \ \ \frac{j m_T}{\text{gcd}(\frac{\ell m_T}{m},j m_T)}.\] Thus, $h$ divides $j m$. 
\end{proof}

\section{Proof of Theorems~\ref{thm:exceptionals} and~\ref{thm:propeller}}\label{sec:arithmetic}

In this final section, we collect the above results into proofs of Theorems~\ref{thm:exceptionals} and~\ref{thm:propeller}. Consider a poset $P$ that is either the Cayley-Moufang poset, the Freudenthal poset, or one of the propellers. By Corollary~\ref{cor:multisets},  the multiset of $\Psi$-orbit cardinalities for $J(P \times \mathbf{k})$ equals the multiset of $\pro^{\rank(P)+ k+1}$-orbit cardinalities for $\inc^{ \rank(P)+ k+1}(P)$. Therefore, we may prove Theorems~\ref{thm:exceptionals} and~\ref{thm:propeller} by studying increasing tableaux instead of plane partitions. 

By Proposition~\ref{prop:deflation_commutation}, the $\pro$-orbit structure of increasing tableaux is controlled by the data of gapless increasing tableaux and binary vectors. But for any fixed $P$, $\incgl(P)$ is a finite set. Using a computer, we found all $549$ gapless increasing tableaux of shape $P_{CM}$, as well as all $624\, 493$ gapless increasing tableaux of shape $P_F$. For each such tableau $T \in \incgl^m(P_{CM})$ or $\incgl^m(P_F)$, we then determined its period under $\pro^m$ by direct calculation. These statistics are given in Table~\ref{tab:E6} for $P_{CM}$ and in Table \ref{tab:E7} for $P_F$. We used \textsc{SageMath}~\cite{sage, combinat} for these calculations.

\begin{table}[ht]
\begin{tabular}{|c|c|c|}
\hline
${m_T}$ & period ($\tau$) & number of orbits ($N$)\\
  \hline
  11 & 1 & 1\\
  \hline
  12 & 3 & 1\\ \cline{2-3}
   & 12 & 1 \\
   \hline
  13 & 13 & 6\\
  \hline
  14 & 7 & 2\\\cline{2-3}
  & 14 & 12\\
  \hline
    15 & 15 & 13\\
  \hline
  16 & 2 & 1\\\cline{2-3}
   & 4 & 1\\\cline{2-3}
   & 8 & 1\\\cline{2-3}
  & 16 & 4\\
  \hline
\end{tabular}
\caption{The distribution of $\pro^{m_T}$-orbits of gapless increasing tableaux in $\incgl^{m_T}(P_{CM})$ for each ${m_T}$.}
\label{tab:E6}
\end{table}

\begin{table}[ht]
\begin{tabular}{|c|c|c|}
\hline
${m_T}$ & period ($\tau$) & number of orbits ($N$) \\
  \hline
  17 & 1 & 1\\
  \hline
  18 & 2 & 1\\ \cline{2-3}
   & 18 & 2 \\
   \hline
  19 & 19 & 30\\
  \hline
  20 & 20 & 228\\
  \hline
    21 & 7 & 3\\ \cline{2-3}
    & 21 & 1044 \\
  \hline
  22 & 22 & 3053\\\cline{2-3}
   & 66 & 2\\
  \hline
  23 & 23 & 5813\\ \cline{2-3}
  & 69 & 13 \\
   \hline
  24 & 8 & 7\\\cline{2-3}
   & 24 & 7195\\\cline{2-3}
   & 48 & 4\\\cline{2-3}
   & 72 & 26\\
   \hline
  25 & 25 & 5602\\ \cline{2-3}
  & 50 & 8 \\ \cline{2-3}
  & 75 & 21 \\
   \hline
  26 & 2 & 2\\ \cline{2-3}
  & 26 & 2495 \\ \cline{2-3}
  & 52 & 4 \\ \cline{2-3}
  & 78 & 6 \\
   \hline
 27 & 3 & 2\\ \cline{2-3}
  & 9 & 4 \\ \cline{2-3}
  & 27 & 484 \\
  \hline
\end{tabular}
\caption{The distribution of $\pro^{m_T}$-orbits of gapless increasing tableaux in $\incgl^{m_T}(P_F)$ for each ${m_T}$.}
\label{tab:E7}
\end{table}

It is essentially trivial to observe that, for any $p$, $\incgl(P_p)$ consists of exactly three increasing tableaux. There are two tableaux in $\incgl^{2p}(P_p)$, forming a single $\pro^{2p}$-orbit, and a single tableau in $\incgl^{2p-1}(P_p)$, which is necessarily fixed by $\pro^{2p-1}$. These three tableaux and their orbits are illustrated in Figure~\ref{fig:propeller_orbits} in the case $p=4$. Table~\ref{tab:prop} records the observations of this paragraph.

\begin{figure}[h]
\begin{tikzpicture}
\node (T7) {\ytableaushort{\none \none 4567,1234}};
\node[above right = -0.7 and .2 of T7] (fake1) {};
\node[above right = -1.2 and .2 of T7] (fake2) {};
\node[below left = 2 and 0.6 of T7] (T8A) {\ytableaushort{\none \none 5678,1234}};
\node[right = 4 of T8A] (T8B) {\ytableaushort{\none \none 4678,1235}};
\path (T8A) edge[pil, out=20,in=160,shorten >=-6mm] node[above]{$\pro^8$} (T8B);
\path (T8B) edge[pil, out = 200, in=340,shorten >=-6mm] node[below]{$\pro^8$} (T8A);
\path (fake1) edge[pil, out = 20, in = 340, looseness=4] node[right]{$\pro^7$} (fake2);
\end{tikzpicture}
\caption{The set $\incgl(P_4)$ consists of the three illustrated gapless increasing tableaux. The unique element of $\incgl^7(P_4)$ forms a singleton $\pro^7$-orbit, while $\pro^8$ switches the two elements of $\incgl^8(P_4)$, as shown. The situation for $p \neq 4$ is exactly analogous.}\label{fig:propeller_orbits}
\end{figure}
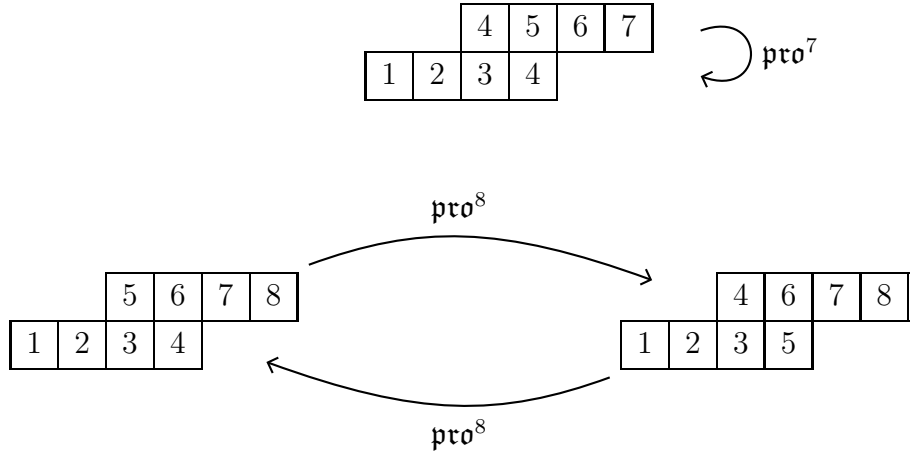

\begin{table}[ht]
\begin{tabular}{|c|c|c|}
\hline
$m_T$ & period ($\tau$) & number of orbits ($N$)\\
  \hline
  $2p-1$ & 1 & 1\\
  \hline
  $2p$ & 2 & 1\\ 
  \hline
\end{tabular}
\caption{The distribution of $\pro^{m_T}$-orbits of gapless increasing tableaux in $\incgl^{m_T}(P_p)$ for each ${m_T}$.}
\label{tab:prop}
\end{table}

We will use Theorems~\ref{thm:periodthm} and~\ref{thm:rsw}, together with the data of Tables~\ref{tab:E6},~\ref{tab:E7}, and~\ref{tab:prop}, to determine the multiset of $\pro^m$-orbit cardinalities on set $\inc^m(P)$ for $P \in \{P_p, P_{CM}, P_F \}$ and any $m$. These tables and Corollary~\ref{corr:pdbound} immediately give the period of $\pro^m$.

\pagebreak

\begin{theorem}\label{thm:actualpdbound}
For $m \gg 0$, the period of $\pro^m$ on $\inc^m(P)$ is 
\begin{itemize}
\item $m$ for $P = P_p$, 
\item $m$ for $P = P_{CM}$, and 
\item $3m$ for $P = P_F$. 
\end{itemize}
(Here `$m \gg 0$' means $m \geq 2p$ for $P = P_p$, $m \geq 12$ for $P = P_{CM}$, and $m \geq 22$ for $P = P_F$.) For $m \in \lbrace 18,19,20, 21 \rbrace$, the period of $\pro^m$ on $\inc^m(P_F)$ is $m$.
\end{theorem}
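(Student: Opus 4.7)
The plan is to bound the period of $\pro^m$ on $\inc^m(P)$ above using Corollary~\ref{corr:pdbound} applied to the gapless data in Tables~\ref{tab:E6}, \ref{tab:E7}, and \ref{tab:prop}, and to realize the matching lower bound by a direct construction using Theorem~\ref{thm:periodthm}. By Proposition~\ref{prop:compressbijective}, every $T \in \inc^m(P)$ decomposes uniquely as $\tinflate^m(T',v)$ with $T' \in \incgl^{m_T}(P)$ and $v \in \binom{[m]}{m_T}$, and Theorem~\ref{thm:periodthm} expresses the $\pro^m$-period of $T$ in terms of the $\pro^{m_T}$-period $\tau$ of $T'$ and the $\Sigma^m$-period $\ell$ of $v$.

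For the upper bound I would inspect each row $(m_T,\tau)$ of the relevant table and determine the smallest $j$ with $\tau \mid j m_T$; Corollary~\ref{corr:pdbound} then bounds the period of every $T$ with this deflation data by $jm$. For $P_p$ and $P_{CM}$, every tabulated $\tau$ divides its $m_T$ outright, so $j=1$ throughout and the period of $\pro^m$ divides $m$. For $P_F$ with $m \leq 21$, only rows with $m_T \leq 21$ are accessible (since $m_T \leq m$), and a similar inspection again yields $j=1$. For $P_F$ with $m \geq 22$, one checks that $j = 3$ suffices across all rows of Table~\ref{tab:E7}, so the period divides $3m$.

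For the matching lower bound I exhibit $T$ whose period equals the claimed value by combining a well-chosen gapless $T'$ with a content vector $v$ of controlled rotation period. The core construction is to place the $m_T$ ones of $v$ in contiguous positions, which forces $\ell = m$ whenever $m > m_T$ (any nontrivial rotation displaces the initial block of ones) and $\ell = 1$ in the boundary case $m = m_T$.  Applying Theorem~\ref{thm:periodthm} to this choice:
\begin{itemize}
\item for $P_F$ with $m \geq 22$, take $T'$ with $m_T = 22$ and $\tau = 66$, yielding period $\tfrac{\ell \cdot 66}{\gcd(22\ell/m,\,66)} = 3m$;
\item for $P_{CM}$ with $m \geq 13$, take $m_T = 13$ and $\tau = 13$, yielding period $m$; and for $m = 12$, take $T$ itself gapless with $\tau = 12$;
\item for $P_F$ with $m \in \{18,19,20,21\}$, take $T$ gapless with $m_T = m$ and $\tau = m$, immediately giving period $m$;
\item for $P_p$ with $m \geq 2p$, take $T'$ with $m_T = 2p-1$ and $\tau = 1$, giving period $m$.
\end{itemize}
Existence of the required gapless $T'$ in each case is guaranteed by the corresponding table.

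The main obstacle will be the row-by-row verification of the upper-bound divisibility $\tau \mid 3 m_T$ in Table~\ref{tab:E7} for $P_F$ with $m \geq 22$, since this is both what drives the period up from $m$ to $3m$ (forced by rows such as $(m_T,\tau) = (22,66)$) and what must be checked for every larger row to keep the period from exceeding $3m$. A secondary bookkeeping point is handling the edge cases $m = m_T$ separately, where $v$ is forced to be all-ones and the period formula must be reread with $\ell = 1$; in each case above, the existence of a gapless tableau with $\tau$ already equal to the claimed period makes this boundary case work without modification.
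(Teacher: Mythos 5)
Your proposal is correct and follows essentially the same route as the paper: the upper bound comes from Corollary~\ref{corr:pdbound} together with the same row-by-row inspection of Tables~\ref{tab:E6}, \ref{tab:E7}, and~\ref{tab:prop} (taking $j=1$ for $P_p$, $P_{CM}$, and $P_F$ with $m\leq 21$, and $j=3$ for $P_F$ with $m\geq 22$), while the lower bound is realized by inflating a maximal-period gapless tableau (e.g.\ $m_T=22$, $\tau=66$ for $P_F$) along a content vector of $\Sigma^m$-period $m$ and invoking Theorem~\ref{thm:periodthm}, which is precisely the paper's device of regarding the gapless tableau as a gappy element of $\inc^m(P)$. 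The only cosmetic difference is your witness for $P_{CM}$ ($m_T=13$, $\tau=13$ for $m\geq 13$, with $m=12$ handled by a gapless period-$12$ tableau) where the paper uses the $m_T=12$, $\tau=12$ tableau uniformly for $m\geq 12$; both choices are justified by Table~\ref{tab:E6}.
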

\begin{proof}
Inspecting Table~\ref{tab:E6} shows that, for $P_{CM}$, the period of each gapless tableau  divides its height ($m_T$). By Corollary~\ref{corr:pdbound}, this implies that for all $m$, the period of each tableau of height $m$ divides $m$. Clearly the period of $\pro^{12}$ on $\inc^{12}(P_{CM})$ is no less than $12$, and whenever $m> 12$, it is possible to find a tableau attaining period $m$ by taking a gapless tableau $T \in \incgl^{12}(P_{CM})$ with period $12$ and then inflating according to a content vector with $\Sigma^m$-period $m$ (for example, by just considering $T \in \incgl^{12}(P_{CM})$ as a gappy tableau in $\inc^m(P_{CM})$). The analogous calculation for $P = P_p$ is easy by inspection of Table~\ref{tab:prop}.

The argument is identical for $P = P_F$ when $m \in \lbrace 18,19,20, 21 \rbrace$, but when $m \geq 22$, we do not have that the period of each gapless tableau divides its height. However, inspecting Table~\ref{tab:E7} shows that, for $P_F$ and for every $m$, the period of each gapless tableaux of height $m_T$ divides $3m_T$. Therefore, Corollary~\ref{corr:pdbound} gives that for every $m$, the period of each tableau of height $m$ divides $3m$. Clearly the period of $\pro^{22}$ on $\inc^{22}(P_{F})$ is no less than $66$, and when $m > 22$, it is possible to find a tableau attaining period $3m$ by taking a gapless tableau  $T \in \incgl^{22}(P_F)$ with period $66$ and inflating according to a content vector of $\Sigma^m$-period $m$ (for example, by considering $T$ as a gappy tableau in $\inc^m(P_F)$).
\end{proof}

\begin{remark}
In light of the fact that one does not necessarily have $(\pro^m)^{\circ m}(T) = T$ for $T \in \inc^m(P_F)$, it is reasonable to ask: 
\begin{quote}
``For which $\x \in P_F$, are we guaranteed $(\pro^m)^{\circ m}(T)(\x) = T(\x)$?''
\end{quote}
 Consider the maximal order ideal $\mathscr{T}$ of $P_F$ that is a tree (in the sense that each $\x \in \mathscr{T}$ covers at most one $\y \in \mathscr{T}$) and the maximal order filter $\mathscr{T}^\vee$ that is a dual tree (in the sense that each $\x \in \mathscr{T}^\vee$ is covered by at most one $\y \in \mathscr{T}^\vee$); let ${\sf Frame}(P_F) \coloneqq \mathscr{T} \cup \mathscr{T}^\vee$. The answer to the question, by directly checking all gapless increasing tableaux of shape $P_F$, is that this property holds precisely for those $\x \in {\sf Frame}(P_F)$. This observation is a Freudenthal analogue of \cite[Theorem~2]{Pechenik:frames} and seems related to \cite[Conjecture~5.4]{Ilango.Pechenik.Zlatin}.
\end{remark}

In order to establish our cyclic sieving results, we will need the fact that the number of elements of $\binom{[i]}{j}$ of any fixed $\Sigma^i$-period is given explicitly by \cite[Theorem~1.1(b)]{Reiner.Stanton.White}:
\begin{theorem}[Reiner, Stanton, and White]\label{thm:rsw}
Let 
\begin{equation*}
f_{i,j}(q) \coloneqq 
\begin{cases}
 \frac{[i]!_q}{[j]!_q\cdot [i-j]!_q} & \text{if } i \geq j \\
 0 & \text{if } i < j 
\end{cases}
\end{equation*}
where $[\ell]!_q \coloneqq \prod_{a=1}^\ell [a]_q$ and $[a]_q \coloneqq 1 + q + \dots + q^{a-1}  = \frac{1-q^{a}}{1-q}$ are the standard $q$-analogues. Then, 
\[\#\left\{ v \in \binom{[i]}{j} : (\Sigma^i)^{\circ s}(v) = v \right\} = f_{i,j}(\zeta^s),\] where $\zeta$ is any primitive $i$th root of unity. \qed
\end{theorem}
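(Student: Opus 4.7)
The plan is to verify the identity by computing both sides explicitly and matching the answers through a common divisibility condition involving $d := \gcd(i,s)$.

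First I would handle the \emph{combinatorial side}. Identifying $\binom{[i]}{j}$ with the $j$-element subsets of $\mathbb{Z}/i\mathbb{Z}$, so that $\Sigma^i$ acts by cyclic rotation, the permutation $(\Sigma^i)^{\circ s}$ generates the same subgroup as rotation by $d$. The orbits of this subgroup on $\mathbb{Z}/i\mathbb{Z}$ are the $d$ residue classes modulo $d$, each of size $i/d$. A fixed subset is therefore a union of such orbits, so the number of fixed elements is $\binom{d}{jd/i}$ when $(i/d) \mid j$, and $0$ otherwise.

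Next I would handle the \emph{polynomial side}. Setting $\omega := \zeta^s$ and $n := i/d$, so that $\omega$ is a primitive $n$th root of unity, I factor $[a]_q = \prod_{e \mid a,\, e > 1} \Phi_e(q)$ into cyclotomic polynomials and observe that $\Phi_n$ appears in $[a]!_q$ with multiplicity $\lfloor a/n \rfloor$. Comparing multiplicities in the numerator and denominator of $f_{i,j}$ shows that $f_{i,j}(\omega) = 0$ unless $n$ divides $j$, and otherwise reduces the evaluation to an ordinary binomial coefficient. Concretely, I would invoke the $q$-analogue of Lucas's theorem,
\[
\binom{a}{b}_q\bigg|_{q=\omega} \;=\; \binom{\lfloor a/n\rfloor}{\lfloor b/n\rfloor} \cdot \binom{a \bmod n}{b \bmod n}_\omega,
\]
and apply it with $a = i$, $b = j$; since $n \mid i$ forces $a \bmod n = 0$, the second factor equals $1$ when $n \mid j$ and $0$ otherwise, so the evaluation collapses to $\binom{d}{jd/i}$ when $n \mid j$ and to $0$ otherwise. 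This exactly matches the combinatorial count.

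The main obstacle is the cyclotomic bookkeeping in the polynomial step; invoking the $q$-Lucas identity circumvents any direct manipulation of roots of unity and reduces the whole argument to checking that two explicit conditional binomial expressions coincide.
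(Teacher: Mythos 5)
Your argument is correct, but note that the paper does not actually prove this statement: it is quoted verbatim from Reiner--Stanton--White \cite{Reiner.Stanton.White} (their Theorem~1.1(b)), and the terminal \qed signals that no proof is given here. Your proof is essentially the standard direct verification of that result. The combinatorial side is right: $(\Sigma^i)^{\circ s}$ generates the same cyclic subgroup as rotation by $d=\gcd(i,s)$, whose orbits on $\mathbb{Z}/i\mathbb{Z}$ are the $d$ residue classes of size $i/d$, so the fixed $j$-subsets number $\binom{d}{jd/i}$ if $(i/d)\mid j$ and $0$ otherwise. The polynomial side is also right: $\zeta^s$ is a primitive $n$th root of unity with $n=i/d$, and either the cyclotomic-multiplicity count ($\Phi_n$ occurs in $[a]!_q$ with multiplicity $\lfloor a/n\rfloor$, so $f_{i,j}(\zeta^s)=0$ unless $n\mid j$) or the $q$-Lucas identity with $a=i$, $b=j$ collapses the evaluation to the same conditional binomial. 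Two small housekeeping points: the $q$-Lucas theorem is itself a nontrivial ingredient and should be cited (or replaced by the elementary pairing of numerator and denominator factors by residue class modulo $n$, exactly in the spirit of Lemma~\ref{lem:evalq} and Equation~\eqref{eq:evalstraightshape} of this paper, which suffices here because $n\mid i$); and you should make explicit that the degenerate case $j>i$ is consistent, since both sides are $0$ by convention. With those remarks your write-up stands as a complete, self-contained proof of the cited result.
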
 

For specified $P \in \{P_p, P_{CM}, P_F\}$, we denote by $m_T\langle i\rangle$, $\tau\langle i\rangle$, and $N\langle i\rangle$ the $i$th element of the first, second, and third columns of the corresponding table, respectively. 

\begin{theorem}\label{thm:mainresult}
Fix $P \in \lbrace  P_p, P_{CM} \rbrace$, $m \gg 0$, and $d$ dividing $m$. Let $\mathcal{R}(P,m,d)$ be the number of increasing tableaux $T \in \inc^m(P)$ whose $\pro^m$-period divides $m/d$. Then,
\begin{equation}\label{eq:mainresulteq} 
\mathcal{R}(P,m,d)  = \sum \limits_{i: \, d \, \vert \, \frac{m_T\langle i\rangle }{\tau\langle i\rangle }} \tau\langle i\rangle \, N\langle i\rangle \, f_{m,m_T\langle i\rangle }(\zeta^{m/d}).
\end{equation}
\end{theorem}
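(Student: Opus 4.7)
The strategy is to transfer the count from $\inc^m(P)$ to pairs (gapless tableau, content vector) via $\compress^m$, and then decompose the resulting fixed-point condition. By Proposition~\ref{prop:compressbijective}, $\compress^m$ is a bijection between $\inc^m(P)$ and the set of pairs $(S,v)$ with $S \in \incgl^{m_T}(P)$ and $v \in \binom{[m]}{m_T}$. Equation~\eqref{eq:k_commutes} intertwines $\pro^m$ with the operator $K^m$ on such pairs, hence
\[
\mathcal{R}(P,m,d) = \#\bigl\{(S,v) : (K^m)^{\circ m/d}(S,v) = (S,v)\bigr\}.
\]

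I would then analyze this condition via the two projections. Projecting onto the content-vector factor forces $(\Sigma^m)^{\circ m/d}(v) = v$, i.e., the $\Sigma^m$-period $\ell$ of $v$ divides $m/d$. Projecting onto the gapless-tableau factor forces $(\pro^{m_T})^{\circ n'}(S) = S$, where $n'$ is the count from Equation~\eqref{eq:n'} with $n = m/d$. Because $\ell \mid m/d$, the pattern of $1$'s in $v$ tiles the cyclic window $v\langle 1\rangle, \dots, v\langle m/d\rangle$ evenly, and this window contains exactly $(m/d)(m_T/m) = m_T/d$ ones; in particular $d \mid m_T$, and $n' = m_T/d$. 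Thus the $\pi_1$-condition reduces to $(\pro^{m_T})^{\circ m_T/d}(S) = S$, which is equivalent to the period $\tau(S)$ of $S$ under $\pro^{m_T}$ dividing $m_T/d$, i.e., to $d \mid m_T(S)/\tau(S)$. Conversely, these two conditions are sufficient for $(K^m)^{\circ m/d}$-fixity directly from the definition of $K^m$.

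Assembling the pieces,
\[
\mathcal{R}(P,m,d) = \sum_{\substack{S \in \incgl(P) \\ d \,\mid\, m_T(S)/\tau(S)}} \#\bigl\{v \in \binom{[m]}{m_T(S)} : (\Sigma^m)^{\circ m/d}(v) = v\bigr\},
\]
and by Theorem~\ref{thm:rsw} each inner cardinality equals $f_{m, m_T(S)}(\zeta^{m/d})$. Finally I would group the outer sum by rows of Table~\ref{tab:E6} (when $P = P_{CM}$) or Table~\ref{tab:prop} (when $P = P_p$): row $i$ contributes $\tau\langle i\rangle\, N\langle i\rangle$ gapless tableaux $S$ with $(m_T(S), \tau(S)) = (m_T\langle i\rangle, \tau\langle i\rangle)$, yielding the stated formula. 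The main obstacle is the identification $n' = m_T/d$, but this is a short arithmetic consequence of Equation~\eqref{eq:n'} once $\ell \mid m/d$ is used; the key structural input is that $\tau\langle i\rangle$ always divides $m_T\langle i\rangle$ in the tables for $P_{CM}$ and $P_p$ (a property that fails for $P_F$, which is precisely why the theorem is stated only in these two cases).
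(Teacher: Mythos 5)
Your proposal is correct and takes essentially the same route as the paper: use the bijection $\compress^m$ of Proposition~\ref{prop:compressbijective} to pass to pairs (gapless tableau, content vector), reduce the period condition to ``$d$ divides $m_T/\tau$'' on the gapless factor together with $(\Sigma^m)^{\circ m/d}$-fixity of the content vector, count the latter by Theorem~\ref{thm:rsw}, and group by the rows of the tables. The only cosmetic difference is that you characterize fixed points of $(K^m)^{\circ m/d}$ directly from Equations~\eqref{eq:k_commutes} and~\eqref{eq:n'} (computing $n' = m_T/d$), whereas the paper invokes the closed period formula of Theorem~\ref{thm:periodthm} to get the inflated tableau's period $m/\gcd(m_T/\tau,d')$; these amount to the same computation.
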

\begin{proof}
Proposition~\ref{prop:compressbijective} gives that a tableau $T \in \inc^m(P)$ corresponds bijectively to a pair consisting of a gapless tableau $U \in \incgl^{m_T}(P)$ and a content vector of length $m$ with $m_T$ ones. The table corresponding to $P$ enumerates all gapless tableaux. We iterate over the rows of the table and determine, for each row, how many content vectors yield tableaux in $\inc^m(P)$ whose periods divide $m/d$. 

For each $i$, let $\ell\langle i\rangle = m_T\langle i\rangle/\tau\langle i\rangle$. Recall the definition of $d'$ from Equation~\eqref{eq:n'}. By Theorem~\ref{thm:periodthm}, a gapless tableau $U \in \incgl^{m_T\langle i\rangle}(P)$ and a content vector $v$ with period $m\langle i\rangle/d'$ inflate to a tableau $V$ of period 
\[ 
\frac{\frac{m\langle i\rangle}{d'} \frac{m_T\langle i\rangle }{\ell\langle i\rangle }}{\gcd(\frac{m_T\langle i\rangle}{d'},\frac{m_T\langle i\rangle }{\ell\langle i\rangle})} = \frac{\frac{m\langle i\rangle}{d'} \frac{m_T\langle i\rangle }{\ell\langle i\rangle}}{\frac{m_T\langle i\rangle \, \gcd(\ell\langle i\rangle,d')}{d' \ell\langle i\rangle}} = \frac{m\langle i\rangle}{\gcd(\ell\langle i\rangle,d')}. 
\] 
Therefore, $V$ has period dividing $m\langle i\rangle/d$ if and only if $d$ divides both $d'$ and $\ell\langle i\rangle$. Now, $d$ divides $d'$ if and only if the period of $v$ divides $m\langle i\rangle/d$. But, by Theorem~\ref{thm:rsw}, the number of content vectors of length $m\langle i\rangle$ with $m_T\langle i\rangle$ $1$s and with period dividing $m\langle i\rangle/d$ is precisely $f_{m,m_T\langle i\rangle}(\zeta^{m/d})$.
\end{proof}

The formula resulting from Theorem~\ref{thm:mainresult} is simple enough to check by hand. 
As in \cite[Proof of Theorem 7.1]{Reiner.Stanton.White}, we need the following elementary identity, which allows us to work with integers rather than $q$-integers.

\begin{lemma}\label{lem:evalq}
Let $\zeta$ be a primitive $N$th root of unity and let $d > 0$ divide $N$. Then $[n]_{\zeta^{N/d}} = 0$ if and only if $d > 1$ and $n \equiv 0 \mod d$. Moreover, if $n_1 \equiv n_2 \mod d$, then 
\begin{equation}\label{eq:evalq}
\pushQED{\qed}
\lim \limits_{q \rightarrow \zeta^{N/d}} \frac{[n_1]_q}{[n_2]_q} = 
\begin{cases}
\frac{n_1}{n_2}, &\text{if }  n_1 \equiv n_2 \equiv 0 \mod d; \\
1, &\text{if } n_1 \equiv n_2 \neq 0 \mod d.
\end{cases} \qedhere \popQED
\end{equation}
\end{lemma}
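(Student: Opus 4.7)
The plan is to work directly from the formula $[n]_q = (1-q^n)/(1-q)$ and analyze numerator and denominator separately at the specialization $q = \omega$, where I set $\omega := \zeta^{N/d}$. Since $\zeta$ is a primitive $N$th root of unity and $d \mid N$, the element $\omega$ is a primitive $d$th root of unity, and in particular $\omega = 1$ precisely when $d = 1$. Everything reduces to elementary manipulations with this $\omega$.

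For the first assertion, if $d = 1$ then $\omega = 1$ and $[n]_\omega = n \neq 0$ for every positive integer $n$. If $d > 1$ then $1 - \omega \neq 0$, so $[n]_\omega = (1-\omega^n)/(1-\omega)$ vanishes if and only if $\omega^n = 1$, which, since $\omega$ has exact order $d$, happens if and only if $d \mid n$. This gives the stated dichotomy.

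For the identity \eqref{eq:evalq}, I split on whether $n_1 \equiv 0 \pmod d$. In the nonzero residue case, both $[n_1]_\omega$ and $[n_2]_\omega$ are nonzero by the first part, so the limit is simply the ratio of values; using $\omega^d = 1$ and $n_1 - n_2 \in d\mathbb{Z}$, I get $\omega^{n_1} = \omega^{n_2}$, so the two numerators $1 - \omega^{n_i}$ coincide and the ratio equals $1$. In the divisible case, write $n_i = d m_i$; the factorization $[dm]_q = [m]_{q^d}\,[d]_q$ gives $[n_1]_q/[n_2]_q = [m_1]_{q^d}/[m_2]_{q^d}$, and since $q^d \to \omega^d = 1$, this limit is $m_1/m_2 = n_1/n_2$. (Alternatively, a direct L'Hopital on $(1-q^{n_1})/(1-q^{n_2})$ produces $(n_1/n_2)\,q^{n_1-n_2}$, evaluating to $n_1/n_2$ since $\omega^{n_1-n_2}=1$.) There is no real obstacle: the only thing to watch is the bookkeeping of the two cases, and the shared hypothesis $n_1 \equiv n_2 \pmod d$ is exactly what makes $\omega^{n_1-n_2}=1$ in both.
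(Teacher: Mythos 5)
Your proof is correct. Note that the paper itself gives no argument for this lemma: it is stated with the proof omitted as an elementary identity (following Reiner--Stanton--White), so your write-up simply supplies the standard verification --- identifying $\zeta^{N/d}$ as a primitive $d$th root of unity, reading off the vanishing of $[n]_q$ from $(1-q^n)/(1-q)$, and handling the two cases of \eqref{eq:evalq} by cancellation (via $[dm]_q=[m]_{q^d}[d]_q$ or L'H\^opital) --- and it does so completely and accurately.
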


For example, we evaluate $f^{m-2p+1}_{P_p}(\zeta^{m/d})$ for some $d > 1$ dividing both $m$ and $p$. Since the minuscule posets are Gaussian, as discussed in Remark~\ref{rem:gaussian}, we have
\tiny
\[   f^{m-2p+1}_{P_p}(q) = \frac{[m-(2p-2)]_{q}[m-(2p-2)+1]_{q} \cdots [m-(p-1)]_{q}^2 \cdots [m-1]_{q}[m]_{q}}{[1]_{q}[2]_{q} \cdots [p]_{q}^2 \cdots [2p-2]_{q}[2p-1]_{q}}. \]
\normalsize
Since $f^{m-2p+1}_{P_p}(q)$ is, by definition, a polynomial, we clearly have $f^{m-2p+1}_{P_p}(\zeta^{m/d}) = \lim_{q \rightarrow \zeta^{m/d}} f^{m-2p+1}_{P_p}(q)$.
Thus, Lemma~\ref{lem:evalq} allows us to replace this ratio of polynomials with a ratio of integers by matching equivalence classes modulo $d$ in the numerator and denominator. We see that the numerator and denominator have the same multiset of equivalence classes modulo $d$. Pairing equivalent terms and using Lemma~\ref{lem:evalq}, we have
\[ f^{m-2p+1}_{P_p}(\zeta^{m/d}) = \frac{(m-2p+d)(m-2p+2d) \cdots  (m-d)(m)}{(d) (2d)\cdots  (2p-2d)(2p-d) p} = 2 \binom{m/d}{2p/d}.\]     

An important special case of Lemma~\ref{lem:evalq} allows us to extend Theorem~\ref{thm:rsw}. We have that for any $i, j \in \mathbb{Z}_{> 0}$, if $\zeta$ is a primitive $i$th root of unity and $d$ divides $\gcd(i,j)$, then
\begin{equation}\label{eq:evalstraightshape}
f_{i,j}(\zeta^{i/d}) = \binom{i/d}{j/d} = \frac{(i-(j-d))(i-(j-2d)) \cdots  (i-d)(i)}{(d)  (2d)  \cdots  (j-d) (j)}. 
\end{equation}
Equation~(\ref{eq:evalstraightshape}) holds even when $j > i$, since in that case one of terms in the numerator is $0$.

\begin{proof}[Proof of Theorem~\ref{thm:propeller}] Fix positive integers $p$ and $k$. Let $m = k+2p-1 = k + \rank(P_p) + 1$ and suppose $d$ divides $m$. We have that \[ f^{m-2p+1}_{P_p}(q) = \frac{[m]!_q [m-(p-1)]_q}{[2p-1]!_q [m-(2p-1)]!_q [p]_q }.\]  If $d = 1$, then by Theorem~\ref{thm:mainresult}, 
\begin{align*}
 \mathcal{R}(P_p,m,d) = f_{m,2p-1}(1) + 2 \, f_{m,2p}(1) &= \binom{m}{2p-1} + 2 \binom{m}{2p} \\
 &= \frac{(2m-2p+2)m!}{(2p)!(m-2p+1)!} = f^{m-2p+1}_{P_p}(1),
 \end{align*}
 as desired.
Now, for $d > 1$, we can have that $d$ divides $p$, or $d$ divides $2p-1$, or $d$ divides neither. If $d$ divides $p$, then by Theorem~\ref{thm:mainresult} and Equation~(\ref{eq:evalstraightshape}),
\[ \mathcal{R}(P_p,m,d) = 2 \, f_{m,2p}(\zeta^{m/d}) = 2\binom{m/d}{2p/d} = f^{m-2p+1}_{P_p}(\zeta^{m/d}).\]
If instead $d$ divides $2p-1$, then we have
\[ \mathcal{R}(P_p,m,d) =  f_{m,2p-1}(\zeta^{m/d}) = \binom{m/d}{(2p-1)/d} = f^{m-2p+1}_{P_p}(\zeta^{m/d}).\]
Finally, if $d$ divides neither $p$ nor $2p-1$, then on the one hand Theorem~\ref{thm:mainresult} claims $\mathcal{R}(P_p,m,d) = 0$, while on the other hand $d$ divides $\lceil{\frac{2p-1}{d}}\rceil$ of the terms in the numerator of $f^{m-2p+1}_{P_p}(\zeta^{m/d})$ and at most $\lfloor{\frac{2p-1}{d}}\rfloor$ of the terms in the denominator, so $f^{m-2p+1}_{P_p}(\zeta^{m/d}) = 0$. 
\end{proof}

The verification for $P = P_{CM}$ is similarly straightforward. Before carrying out this verification, we resolve Conjecture~\ref{conj:rush.shi} as it applies to $P_F$.

\begin{theorem}\label{thm:F_bad}
Conjecture~\ref{conj:rush.shi} holds for $P_F$ only when $k \leq 4$.
\end{theorem}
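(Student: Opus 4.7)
The statement has two parts: the conjecture holds for $k \leq 4$ and fails for $k \geq 5$. The cases $k \leq 3$ were already verified in \cite{Rush.Shi}, so only $k = 4$ requires treatment for the positive direction.

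For $k = 4$ (hence $m = 21$), the plan is to proceed in parallel to the proof of Theorem~\ref{thm:propeller}. Inspection of Table~\ref{tab:E7} reveals that every gapless tableau of height $m_T \leq 21$ has period $\tau$ dividing $m_T$. Consequently the derivation of Theorem~\ref{thm:mainresult} applies verbatim to $P_F$ in this range, yielding the explicit formula
\[
\mathcal{R}(P_F, 21, d) = \sum_{i:\, d\,\vert\, m_T\langle i\rangle/\tau\langle i\rangle} \tau\langle i\rangle\, N\langle i\rangle\, f_{21, m_T\langle i\rangle}(\zeta^{21/d})
\]
for each divisor $d$ of $21$. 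The verification reduces to the four cases $d \in \lbrace 1, 3, 7, 21 \rbrace$; in each, one matches the resulting sum against $f^4_{P_F}(\zeta^{21/d})$ using Lemma~\ref{lem:evalq} and Equation~\eqref{eq:evalstraightshape}.

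For $k \geq 5$, I would argue uniformly via a cyclotomic obstruction at the fixed point count. First, for any $k \geq 1$, Theorem~\ref{thm:periodthm} combined with Table~\ref{tab:E7} forces $\pro^m$ to have no fixed points on $\inc^m(P_F)$: a $\pro^m$-fixed tableau $T$ requires $\ell = \tau = 1$, so the content vector $\content^m(T)$ must be constant of weight $m_T \geq 17$, forcing $m_T = m$ and hence $T$ gapless; but the only gapless tableau in $\incgl^{m_T}(P_F)$ with $\tau = 1$ has $m_T = 17$, forcing $m = 17$ and $k = 0$. On the other hand, for $k \geq 5$, Theorem~\ref{thm:actualpdbound} gives that the period of $\pro^m$ on $\inc^m(P_F)$ is $3m$; the cyclic sieving equation at $d = 1$ then demands $f^k_{P_F}(\zeta) = 0$ for $\zeta$ any primitive $(3m)$th root of unity. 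But from the Gaussian product in Remark~\ref{rem:gaussian}, the cyclotomic polynomial $\Phi_{3m}$ divides $f^k_{P_F}$ only if $3m$ divides some $h_\x + k$, whereas $h_\x + k \leq \rank(P_F) + 1 + k = m < 3m$. Hence $f^k_{P_F}(\zeta) \neq 0$, contradicting the CSP.

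The main obstacle in this plan is conceptual rather than computational: one must recognize that the Freudenthal poset produces gapless tableaux whose $K$-promotion periods can be \emph{triple} their heights (the $\tau = 66$ entries at $m_T = 22$ in Table~\ref{tab:E7}), inflating the period of $\pro^m$ beyond what any polynomial of Gaussian form $f^k_{P_F}$ can detect at roots of unity. Once this cyclotomic obstruction is isolated, both directions of the theorem follow cleanly from the machinery developed in Sections~\ref{sec:inflation}--\ref{sec:period}.
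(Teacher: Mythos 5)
Your proposal is correct and follows essentially the same route as the paper: the $k\leq 4$ direction is the same numerical check using Table~\ref{tab:E7} together with the machinery behind Theorem~\ref{thm:mainresult}, and the $k\geq 5$ direction is the paper's argument that $\pro^m$ has no fixed points (content vector forced constant, hence gapless, then ruled out by Table~\ref{tab:E7}) while the period $3m$ from Theorem~\ref{thm:actualpdbound} would force $f^{k}_{P_F}(\zeta)=0$ at a primitive $(3m)$th root of unity, which is impossible since the numerator factors $1-q^{h_\x+k}$ all have exponent at most $m<3m$. No gaps to report.
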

\begin{proof}
That Conjecture~\ref{conj:rush.shi} holds for $P_F$ in the case $k \leq 4$ can be checked numerically using Table~\ref{tab:E7} and the proof of Theorem~\ref{thm:mainresult}.

It remains to show that the conjecture fails for $k \geq 5$. Hence, let $m \geq 22 = 5 +  \rank(P_F) + 1$. By Theorem~\ref{thm:actualpdbound}, $\pro^m$ has period $3m$ on $\inc^m(P_F)$. 

Suppose $T \in \inc^m(P_F)$ were fixed by $\pro^m$.  By Equation~\ref{eq:period}, the $\Sigma^m$-period of $\content^m(T)$ is $1$. Since $\content^m(T)$ is certainly not a vector of all $0$'s, it must therefore be a vector of all $1$'s. Hence, $m_T = m$ and $\deflate(T) = T$. Therefore, $T$ is gapless. However, Table~\ref{tab:E7} shows that no element of $\incgl^m(P_F)$ is fixed by $\pro^m$, for any $m \geq 22$, so such a $T$ does not exist.

Now, let $\zeta$ be a primitive $(3m)$th root of unity. By the above, Conjecture~\ref{conj:rush.shi} claims that $f_{P_F}^{m-17}(\zeta) = 0$. But this is impossible, for then the minimal polynomial of $\zeta$ would divide the numerator of $f^{m-17}_{P_F}(q)$, which itself divides a product of factors of the form $1-q^n$ where $n < 3m$. Thus, Conjecture~\ref{conj:rush.shi} fails in these cases. 
\end{proof}

Finally, we are prepared to prove Theorem~\ref{thm:exceptionals}, the main result of this paper.

\begin{proof}[Proof of Theorem~\ref{thm:exceptionals}]
Theorem~\ref{thm:F_bad} proves the $P_F$ cases. Hence, it remains to consider $P = P_{CM}$.
Fix $k$ and let $m \coloneqq k +11   = k + \rank(P_{CM}) + 1$. 

By inspection of Table~\ref{tab:E6}, the possible values of $\ell\langle i\rangle \coloneqq m_T\langle i\rangle/\tau\langle i\rangle$ are $1$, $11$, $2$, $4$, and $8$. Therefore, we will determine $\mathcal{R}(P_{CM},m,d)$ for each $d$ that divides at least one of these values. We will verify that this number matches the prediction given by $f^{m-11}_{CM}$ and Conjecture~\ref{conj:rush.shi}. For all other values of $d$, we have $\mathcal{R}(P_{CM},m,d) = 0$, so our final check will be that  $f^{m-11}_{CM}(\zeta^{m/d}) = 0$ in these cases.

\smallskip

\noindent
{\sf (Case 1 : $d = 1$)}:
Here, Theorem~\ref{thm:mainresult} and Table~\ref{tab:E6} give
\tiny
\begin{align*}
\mathcal{R}(P_{CM},m,1) &= 1 \cdot 1 \cdot \binom{m}{11} + (3 \cdot 1  + 12 \cdot 1)  \cdot \binom{m}{12} + 13 \cdot 6 \cdot \binom{m}{13} + (7\cdot 2 + 14 \cdot 12) \binom{m}{14}  \\ &\ \ \ \ \ \ + 15 \cdot 13 \cdot \binom{m}{15} + (2 \cdot 1 + 4 \cdot 1 + 8 \cdot 1 + 16 \cdot 4) \binom{m}{16} \\
&= \frac{(m-10)(m-9)(m-8)(m-7)^2(m-6)^2(m-5)^2(m-4)^2(m-3)^2(m-2)(m-1)m}{1\cdot 2\cdot 3\cdot 4^2 \cdot 5^2 \cdot 6^2 \cdot 7^2 \cdot 8^2 \cdot 9 \cdot 10 \cdot 11} \\
&= f_{CM}^{m-11}(1),
\end{align*}
\normalsize
as desired.

\smallskip
\noindent
{\sf (Case 2 : $d = 11$)}:
By Theorem~\ref{thm:mainresult}, Lemma~\ref{lem:evalq}, and Table~\ref{tab:E6}, we have
\begin{align*}
\mathcal{R}(P_{CM},m,11) &= 1 \cdot 1 \cdot \binom{m/11}{11/11} \\
&= m/11 \\ 
&= f_{CM}^{m-11}(\zeta^{m/11}).
\end{align*}

\smallskip
\noindent
{\sf (Case 3 : $d = 8$)}:
By Theorem~\ref{thm:mainresult}, Lemma~\ref{lem:evalq}, and Table~\ref{tab:E6}, we have
\begin{align*}
\mathcal{R}(P_{CM},m,8) &= 2 \cdot 1 \cdot \binom{m/8}{16/8} \\
&= \frac{(m)(m-8)}{8^2} \\
&= f_{CM}^{m-11}(\zeta^{m/8}).
\end{align*}

\smallskip
\noindent
{\sf (Case 4 : $d = 4$)}:
By Theorem~\ref{thm:mainresult}, Lemma~\ref{lem:evalq}, and Table~\ref{tab:E6}, we have
\begin{align*}
\mathcal{R}(P_{CM},m,4) &= 3 \cdot 1 \cdot \binom{m/4}{12/4} + (4 \cdot 1 + 2 \cdot 1) \cdot \binom{m/4}{16/4}  \\
&= \frac{(m-8)(m-4)^2m}{4^2 \cdot 8^2} \\
&= f_{CM}^{m-11}(\zeta^{m/4}).
\end{align*}

\smallskip
\noindent
{\sf (Case 5 : $d = 2$)}:
By Theorem~\ref{thm:mainresult}, Lemma~\ref{lem:evalq}, and Table~\ref{tab:E6}, we have
\begin{align*}
\mathcal{R}(P_{CM},m,2) &= 3 \cdot 1 \cdot \binom{m/2}{12/2} + 7 \cdot 2 \cdot \binom{m/2}{14/2} + (2 \cdot 1 + 4 \cdot 1 + 8 \cdot 1) \cdot \binom{m/2}{16/2} \\
&= \frac{(m-10)(m-8)(m-6)^2(m-4)^2(m-2)m}{10 \cdot 8^2 \cdot 6^2 \cdot  4^2 \cdot 2}\\
&= f_{CM}^{m-11}(\zeta^{m/2}).
\end{align*}

Finally, it remains to check that for all other values of $d$, we have $f^{m-11}_{CM}(\zeta^{m/d}) = 0$.  First, observe that $f^{m-11}_{CM}(\zeta^{m/d}) = 0$ if $d > 11$, for in this case, Lemma~\ref{lem:evalq} implies that the factor $[m]_{\zeta^{m/d}}$ in the numerator is zero, while all of the factors in the denominator are nonzero. 
For the remaining cases $d \in \{3,5,6,7,9,10 \}$, it is similarly easy to check, using Lemma~\ref{lem:evalq} and counting congruences to $0$ in the numerator and denominator, that $f^{m-11}_{CM}(\zeta^{m/d}) = 0$. 
\end{proof}

\section*{Acknowledgements}
The authors thank Julianna Tymoczko for introducing them to each other. We are also grateful to two anonymous referees for helpful suggestions. HM was partially supported by a Graduate Research Fellowship from the National Science Foundation. OP was partially supported by a Mathematical Sciences Postdoctoral Research Fellowship (\#1703696) from the National Science Foundation. 

This material is based upon work supported by the National Science Foundation Graduate Research Fellowship Program under Grant No. DGE-1752814. Any opinions, findings, and conclusions or recommendations expressed in this material are those of the authors and do not necessarily reflect the views of the National Science Foundation.

%
%

\bibliographystyle{amsalpha} 
\bibliography{exceptional}

\end{document}